\tikzset{anchorbase/.style={baseline={([yshift=-0.5ex]current bounding 
box.center)}}}
\tikzset{wipe/.style={white,line width=4pt}}
\theoremstyle{plain}
\newtheorem*{theorem*}{Theorem}
\newtheorem*{remark*}{Remark}
\newtheorem*{example*}{Example}
\newtheorem{lemma}{Lemma}[subsection]
\newtheorem{proposition}[lemma]{Proposition}
\newtheorem{corollary}[lemma]{Corollary}
\newtheorem{theorem}[lemma]{Theorem}
\newtheorem*{conjecture*}{Conjecture}
\newtheorem{introtheorem}{Theorem}
\newtheorem{introprob}[introtheorem]{Problem}
\theoremstyle{definition}
\newtheorem{definition}[lemma]{Definition}
\newtheorem{example}[lemma]{Example}
\theoremstyle{remark}
\newtheorem{remark}[lemma]{Remark}
\newtheorem{notation}[lemma]{Notation}
\newcommand\xrowht[2][0]{\addstackgap[0.5\dimexpr#2\relax]{\vphantom{#1}}}
\newcommand{\Hom}{\operatorname{Hom}}
\newcommand{\triv}{{\mathbbm 1}}
\newcommand{\id}{\operatorname{Id}}
\newcommand{\Ker}{\operatorname{Ker}}
\newcommand{\F}{{\mathbb F}}
\newcommand{\End}{\operatorname{End}}
\newcommand{\C}{{\mathbb C}}
\newcommand{\Z}{{\mathbb Z}}
\newcommand{\lam}{{\lambda}}
\newcommand{\abs}[1]{\left|{#1}\right|}
\newcommand{\sgn}{\mathtt{sgn}}
\newcommand{\InnaA}[1]{#1} 
\newcommand{\InnaB}[1]{#1}
\newcommand{\InnaC}[1]{#1}
\newcommand{\comment}[1]
\def\quotient#1#2{%
    \raise1ex\hbox{$#1$}\Big/\lower1ex\hbox{$#2$}%
}
\begin{document}

\date{\today}
\title{McKay trees}
 \author{Avraham Aizenbud, Inna Entova-Aizenbud}
\address{Avraham Aizenbud, Dept. of Mathematics, Weizmann Institute of Science, Rehovot,
Israel.}
\email{aizenr@gmail.com}
\address{Inna Entova-Aizenbud, Dept. of Mathematics, Ben Gurion University of the Negev,
Beer-Sheva,
Israel.}
\email{entova@bgu.ac.il}
\begin{abstract}
Given a finite group $G$ and its representation $\rho$, the corresponding McKay graph is a graph $\Gamma(G,\rho)$ whose vertices are the irreducible representations of $G$; the number of edges between two vertices $\pi,\tau$ of $\Gamma(G,\rho)$ is $\dim \Hom_G(\pi\otimes \rho, \tau) $. The collection of all McKay graphs for a given group $G$ encodes, in a sense, its character table. Such graphs were also used by McKay to provide a bijection between the finite subgroups of $SU(2)$ and the affine Dynkin diagrams of types $A, D, E$, the bijection given by considering the appropriate McKay graphs. 

In this paper, we classify all (undirected) \InnaB{trees} which are McKay graphs of finite groups \InnaB{and describe the corresponding pairs $(G,\rho)$}; this classification turns out to be very concise.

Moreover, \InnaB{we give a partial classification of McKay graphs which are forests, and construct some non-trivial examples of such forests.}
 \end{abstract}

\keywords{}
\maketitle
\setcounter{tocdepth}{3}
\section{Introduction}

\subsection{Definition of the McKay graph}
For a finite group $G$ and its complex representation $\rho$, we consider the McKay graph $\Gamma(G, \rho)$: its vertices are given by the set $Irr(G)$ of irreducible complex representations of $G$, and the number of edges $N_{\pi,\tau}$ between two vertices $\pi,\tau \in Irr(G)$ of $\Gamma(G,\rho)$ is $\dim \Hom_G (\pi\otimes \rho, \tau) $. In general, this is a directed graph.

We will consider undirected graphs as a special case of directed graphs. If $\rho$ is self-dual then $N_{\pi,\tau}=N_{\tau,\pi}$ and the adjacency matrix of the graph $\Gamma(G,\rho)$ is symmetric. In this case we will consider the McKay graph as an undirected graph, with an undirected edge corresponding to a pair of directed edges of opposite directions.

\subsection{Main results}
We work over the base field $\C$. In this paper we consider McKay graphs which are (undirected) forests: these are undirected graphs without closed paths (in a path, no repetitions of edges are allowed). In particular, in such a graph there is at most one edge between each $2$ vertices, and no edges from a vertex to itself.
\begin{introtheorem}[See \InnaB{Proposition \ref{prop:forest},} Theorem \ref{thrm:forest}]\label{introthm:forests}
Let $G$ be a finite group and $\rho$ its representation.

Assume that the McKay graph $\Gamma(G,\rho)$ is a forest. Then we have: 
    \begin{enumerate}
        \item $\rho$ is irreducible and self-dual.
        \item Each of these components  is isomorphic to one of the following:
\begin{enumerate}
    \item An affine Dynkin diagram of type $\tilde{D}_{n}$, $n\geq 4$.
\item An affine Dynkin diagram of type $\tilde{E}_n$ (for $n=6,7,8$).
\item  A ``hedgehog'': a tree with $4^n$ leaves and one vertex \InnaA{connected to all of these leaves}\footnote{\InnaA{For $n=0$, we obtain a graph with $2$ vertices and an unoriented edge between them.}}, \InnaB{where} $n\geq 0$.
\end{enumerate}
\InnaA{Furthermore, if one of the components is a ``hedgehog'' with $4^n$ leaves, $n\neq 1$, then \InnaB{it is isomorphic to} the remaining components.}
\end{enumerate}
\end{introtheorem}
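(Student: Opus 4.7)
The plan is to invoke Proposition \ref{prop:forest} to obtain part (1), namely that $\rho$ is irreducible and self-dual, and then classify the connected components of $\Gamma(G,\rho)$. The starting observation is that for any component $C$, the vector $(\dim\pi)_{\pi\in C}$ is a positive integer eigenvector of the adjacency matrix of $C$ with eigenvalue $d:=\dim\rho$, so by Perron--Frobenius $d$ is the spectral radius of $C$ and this vector records its Perron data. This reduces the problem to classifying the trees that can arise as components, organised by $d$.

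If $d=1$, every vertex has degree $1$, hence each component is a single edge---the hedgehog with $4^0=1$ leaf. If $d=2$, Smith's classification of connected graphs of spectral radius $2$ produces only the affine Dynkin diagrams $\tilde A_n,\tilde D_n,\tilde E_{6,7,8}$; excluding the cyclic $\tilde A_n$ by the forest hypothesis leaves $\tilde D_n$ for $n\ge 4$ and $\tilde E_{6,7,8}$, and $\tilde D_4$ is identified with the $n=1$ hedgehog.

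The technical core is the case $d\ge 3$, where the goal is to show each component is a star $K_{1,d^2}$ and $d$ is a power of $2$. Fix the component $C_0$ containing $\triv$: since $\triv\otimes\rho\cong\rho$, the trivial representation is a leaf adjacent to $\rho$. More generally, the subgroup $H:=\{\lambda\in\widehat{G^{\mathrm{ab}}}:\lambda\otimes\rho\cong\rho\}$ consists exactly of the $1$-dimensional neighbors of $\rho$, and each such $\lambda$ is a leaf because $\lambda\otimes\rho$ remains an irreducible of dimension $d$. The key step---and what I expect to be the main obstacle---is to show that $\rho$ has no neighbor of dimension $\ge 2$; such a $\sigma$ would be a non-leaf, forcing the tree to branch at $\sigma$, and the strategy is to extract a contradiction by iterating the Perron equation along a descending path and exploiting integrality of the eigenvector entries together with the no-cycle constraint (notice that this step is exactly what fails for $d=2$, producing the genuine trees $\tilde D_n$ with $n\ge 5$ and the $\tilde E$'s). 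Granted this, $\rho\otimes\rho=\bigoplus_{\lambda\in H}\lambda$, so $|H|=d^2$ and $C_0=K_{1,d^2}$; equivalently, $\rho$ is a representation of central type, lifting an irreducible projective representation of a finite abelian group $K\cong H^\vee$ of order $d^2$ whose cohomology class in $H^2(K,\C^\times)$ is non-degenerate. Self-duality of $\rho$ forces this class to be $2$-torsion, and the only finite abelian groups admitting a non-degenerate $2$-torsion alternating form are $(\mathbb{Z}/2)^{2n}$, which yields $d=2^n$ with $n\ge 2$.

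For the ``furthermore'' clause, the same trichotomy on $d$ applies to every component: each has Perron eigenvalue $d$, so for $d\ne 2$ (i.e., $n\ne 1$) the only remaining option is the hedgehog with $4^n$ leaves, and all components are therefore mutually isomorphic; only when $d=2$ can distinct components independently be any of $\tilde D_n$ or $\tilde E_{6,7,8}$, which explains the exclusion of $n=1$.
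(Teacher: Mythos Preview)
Your overall structure is sound, and the reduction via Perron--Frobenius to studying trees of spectral radius $d=\dim\rho$ is the right opening move. However, there is a genuine gap in the $d\ge 3$ case, and it is not only where you flagged it.

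For the principal component $C_0$ you correctly isolate the ``key step'' (that $\rho$ has no neighbour of dimension $\ge 2$) as the obstacle, and indeed your sketched strategy of iterating the Perron equation along a path does not by itself succeed; the paper handles $C_0$ by invoking the full tree classification (Theorem~\ref{thrm:main}), whose proof rests on the representation-theoretic identity $\sum_{[g]\in G//G}\dim\End_{C(g)}(\rho\!\downarrow_{C(g)})=2(|G//G|-1)$ and a case analysis on centralizers, not on spectral graph theory alone. Your central-type argument (that $\rho\otimes\rho=\bigoplus_{\lambda\in H}\lambda$ with self-duality forces $|H|=d^2$ and $d=2^n$) is an attractive alternative once the key step is granted, but it is conditional on that step.

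The more serious gap is in your ``furthermore'' paragraph. You assert that for $d\ne 2$ the only tree with Perron eigenvalue $d$ and a positive integer Perron eigenvector is the star $K_{1,d^2}$, and conclude that every non-principal component must also be a hedgehog. This combinatorial claim is \emph{false}. For $d=4$, take the double star: two adjacent centres $a,b$, each carrying $12$ pendant leaves. The vector assigning $1$ to every leaf and $4$ to $a$ and $b$ satisfies $A v = 4v$ (at $a$: $12\cdot 1 + 4 = 16$), so this tree has spectral radius $4$ with a positive integer Perron eigenvector, yet it is not $K_{1,16}$. Spectral data alone therefore cannot force a non-principal component to be a hedgehog. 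The paper supplies the missing constraint through Clifford theory: Lemma~\ref{lem:conn_comp_sum_of_sq} gives $\sum_{\mu\in\Gamma_T}(\dim\mu)^2=|G/N|\cdot|T|\cdot(\dim\tau)^2$, and with $|G/N|=2^{2n+1}$ this bounds the total squared dimension on each component tightly enough to rule out anything larger than a single centre plus its leaves. That Clifford-theoretic input is essential and has no analogue in your plan.
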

Our second result concerns McKay graphs which are trees (i.e. connected forest graphs):
\begin{introtheorem}[See Theorem \ref{thrm:main}]\label{introthm:trees}
Assume $\Gamma(G,\rho)$ is a tree. Then the pair $(G,\rho)$ is isomorphic to one of the following pairs: 
\begin{enumerate}
    \item $G$ is a dihedral group of order divisible by $4$, and $\rho$ is the complexification of its natural $2$-dimensional real representation.
    
\item $G \subset SU(2)$ is a binary polyhedral group of type $D$ or $E$ and $\rho$ is the natural $2$-dimensional representation of $G$.

\item $G$ is an extra special group of order $2^{1+2n}$ for some $n\geq 0$, i.e. a central extension of $(\Z/2\Z)^{2n}$ by $\Z/2\Z$. In that case, $\rho$ is the unique irreducible representation of $G$ of dimension $2^n$.
\end{enumerate} 
\end{introtheorem}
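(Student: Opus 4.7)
The plan is to apply Theorem~A and analyze the three possible shapes of the tree $\Gamma(G,\rho)$ separately, identifying $(G,\rho)$ in each case via character-theoretic constraints and small-group classification. By Theorem~A, $\rho$ is irreducible and self-dual, and $\Gamma(G,\rho)$ is either an affine Dynkin diagram $\tilde D_n$ ($n\geq 4$), an affine $\tilde E_n$ ($n=6,7,8$), or a hedgehog with $4^n$ leaves ($n\geq 0$). Two general reductions apply before the case analysis. First, $\rho$ is faithful: if $K=\ker\rho$ were non-trivial, then along every edge $\pi$--$\tau$ of $\Gamma(G,\rho)$ the restrictions $\pi|_K$ and $\tau|_K$ would share the same irreducible $K$-constituents (because $\rho|_K$ is a multiple of $\triv_K$), and connectedness combined with $\triv|_K=\triv_K$ would force every $\pi\in Irr(G)$ to be trivial on $K$, so $K=1$. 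Second, by Perron--Frobenius the dimension vector $(\dim\pi)$ is the unique positive eigenvector of the adjacency matrix (up to scaling), with eigenvalue $\dim\rho$; reading off the largest eigenvalue gives $\dim\rho=2$ in the Dynkin cases and $\dim\rho=2^n$ in the hedgehog case, with all leaves being $1$-dimensional.

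For the hedgehog case, the graph structure reads $\rho\otimes\rho=\sum_\chi \chi$, the sum over all $1$-dimensional characters of $G$. Character orthogonality on $G^{ab}$ yields $|\chi_\rho(g)|^2=4^n$ if $g\in[G,G]$ and $0$ otherwise; summing over $g$ gives $|G|=2\cdot 4^n=2^{2n+1}$ and $|[G,G]|=2$. Since $[G,G]$ is a non-trivial normal subgroup of order $2$ in the $2$-group $G$, it is central, so $[G,G]\subseteq Z(G)$. Conversely, Schur's lemma forces $\chi_\rho$ to be non-zero on $Z(G)$, and the vanishing of $\chi_\rho$ off $[G,G]$ then gives $Z(G)\subseteq [G,G]$. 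Hence $Z(G)=[G,G]\cong\Z/2$. The commutator induces a bilinear alternating form $\omega\colon G/Z\times G/Z\to Z\cong\F_2$, whose left kernel is $Z(G)/Z=1$, so $\omega$ is non-degenerate. The identity $\omega(\bar g^2,\bar h)=\omega(\bar g,\bar h)^2=1$ forces $\bar g^2=1$ in $G/Z$, so $G/Z$ is elementary abelian of rank $2n$, and $G$ is an extra special $2$-group of order $2^{1+2n}$, giving case~(3).

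In the Dynkin cases, $\rho$ provides an embedding $G\hookrightarrow GL_2(\C)$, so the image $\bar G$ in $PGL_2(\C)$ is a finite subgroup and hence is cyclic, dihedral, or one of $A_4$, $S_4$, $A_5$. The cyclic option is excluded (it would make $G$ abelian, contradicting $\dim\rho=2$). Summing squares of the marks of the Dynkin diagram gives $|G|=4(n-2)$ for $\tilde D_n$ and $|G|=24,48,120$ for $\tilde E_{6,7,8}$. In the $\tilde E_n$ cases, direct examination of groups of these orders with a faithful $2$-dim irreducible representation leaves only the binary tetrahedral, octahedral, and icosahedral groups, giving case~(2). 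In the $\tilde D_n$ case ($n\geq 5$), $\bar G$ is dihedral of order $2(n-2)$, and the central extensions of the correct order admitting a faithful $2$-dim irreducible are exactly the dihedral group $D_{2(n-2)}$ (case~(1)) and the binary dihedral $\mathrm{Dic}_{n-2}$ (case~(2)). The case $\tilde D_4$ coincides with the hedgehog for $n=1$ and gives $G\in\{D_4,Q_8\}$, consistent with all three listed cases. The classical McKay correspondence verifies that the natural $2$-dim representations of the listed groups do produce the stated McKay graphs.

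The main obstacle is the hedgehog case: identifying $G$ as extra special hinges on the non-degeneracy of the commutator form $\omega$, which uses both faithfulness and irreducibility of $\rho$ essentially. A secondary subtlety in the Dynkin cases is ruling out larger central scalar inflations that would disconnect $\Gamma(G,\rho)$; here the shift action of $\mathrm{Hom}(G^{ab},\C^\times)$ on $Irr(G)$ by tensoring gives a short argument, after which the remaining identification follows from standard facts on finite subgroups of $GL_2(\C)$ combined with the explicit order constraint.
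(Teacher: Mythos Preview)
Your argument has a circularity problem: you invoke Theorem~A to obtain the list of possible tree shapes ($\tilde D_n$, $\tilde E_n$, or a hedgehog with $4^n$ leaves), and then identify $(G,\rho)$ shape by shape. But in this paper Theorem~A is proved \emph{using} Theorem~B---the proofs of both Proposition~\ref{prop:forest} and Theorem~\ref{thrm:forest} begin by applying Theorem~\ref{thrm:main} to the principal component. So quoting Theorem~A here presupposes exactly what you are trying to establish. There is no independent derivation of the shape list in the paper, and I do not see a cheap one: knowing that a non-Dynkin McKay tree must be a star graph is essentially the content of the extra-special case you are aiming for.

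If one grants you the shape classification, your case analyses are correct and take a genuinely different route from the paper. For the hedgehog, reading $\rho\otimes\rho=\sum_{\chi}\chi$ off the graph, using orthogonality of characters of $G^{ab}$ to get $|[G,G]|=2$ and $Z(G)=[G,G]$, and then passing to the non-degenerate alternating commutator form on $G/Z$ is clean and conceptual. The paper instead never presupposes the shape: from the tree edge-count it derives the uniform identity $\dim\End_{C(g)}(\rho\downarrow_{C(g)})=2$ for every non-central $g$, and then branches on whether some centralizer is abelian (forcing $\dim\rho=2$ and invoking McKay's classification) or all centralizers are non-abelian (an eigenvalue argument pins the spectrum of each $\rho(g)$ to $\{\pm 1,\pm i\}$, whence $G$ is extra special). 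In the $2$-dimensional case, your detour through $PGL_2(\C)$ and ``direct examination of groups of these orders'' can be made rigorous but is less efficient than the paper's use of the Frobenius--Schur dichotomy: a self-dual irreducible $2$-dimensional $\rho$ carries either a symplectic form (so $G\subset SU(2)$, giving the binary polyhedral list immediately) or a symmetric form (so $G\subset O(2,\mathbb R)$, hence dihedral), with no further casework on orders needed.
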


\begin{remark}\label{intrormk:principal_comp}
The connected component of the McKay graph $\Gamma(G,\rho)$ containing the trivial representation is called the principal component of $\Gamma(G,\rho)$ and it is itself a McKay graph for the group $G/Ker(\rho)$ and its faithful representation $\rho$. 

If $\Gamma(G,\rho)$ is a forest, our second result describes the pair $(G/Ker(\rho), \rho)$ up to isomorphism.
\end{remark}

\InnaA{
Finally, we construct some examples of McKay graphs which are forests with non-isomorphic connected components. Constructing such forests where all the connected components are isomorphic to a given tree from the list above is very easy: for any $n\geq 2$, let $(G, \rho)$ be as in Theorem \ref{introthm:trees}, and consider the McKay graph $\Gamma(G\times \Z/n\Z, \rho \boxtimes \triv)$; this graph is the union of $n$ copies of the tree $\Gamma(G, \rho)$. 

Constructing and classifying forests whose connected components have different sizes seems to be a much more challenging problem. We give several examples of such forests in Section \ref{ssec:forest_examples}.

}

\subsection{Background and motivation}
Let $G$ be a finite group. Then one can reconstruct the isomorphism class of $G$ from the monoidal category of representations of $G$. Therefore, while classifying such monoidal categories is a central and interesting problem, in full generality it is as \InnaB{complex} as the problem of classifying finite groups. A numerical shadow of the monoidal category of representations of $G$ is the character table of $G$. 
It is well-known that this shadow is not enough to reconstruct the group $G$ (for instance, the dihedral group $Dih_4$ and the quaternion group $Q_8$ have the same character table). Yet even the question of describing all \InnaB{possible} character tables still looks very difficult in general. The McKay graph is a combinatorial way to describe a piece of this character table. In fact, the data in the character table of a group $G$ is equivalent to the data of all the McKay graphs $\Gamma(G, \rho)$ where $\rho$ runs over the irreducible representations of $G$ and the labeling of the vertices in the graphs is fixed.
Another motivation to consider McKay graphs is the fact that they are spectral analogues of the Cayley graphs.

So the starting point for this paper is the following:
\begin{introprob}
\mbox{}
\begin{enumerate}
    \item Which (directed) graphs $\Gamma$ are McKay graphs for some finite group $G$ and some representation $\rho$ of $G$? 
    \item Given a finite (directed) graph $\Gamma$ which is a McKay graph, what are the different pairs $(G,\rho)$ such that $\Gamma=\Gamma(G,\rho)$?      
\end{enumerate}
\end{introprob}
It is not clear whether any of these questions can be answered in full generality. We do not expect a comprehensive answer to both of them, since this is equivalent to describing all finite groups and their representations. However, we believe that studying special cases of this problem is illuminating. 

In the present paper, we \InnaB{give a partial} answer \InnaB{to} the first question in the case when $\Gamma$ is an (undirected) forest and \InnaB{answer both questions} in the case when $\Gamma$ is an (undirected) tree.

\subsection{Related works}\label{ssec:intro_rel_works}
In \cite{McKay}, McKay classified McKay graphs with spectral radius $2$; this leads to the ADE classification of the finite subgroups of $SU(2)$, and can be adapted to the classification of pairs $(G,\rho)$ where $\rho$ is a $2$-dimensional self-dual faithful representation of $G$ (see Proposition \ref{prop:McKay_classification}). 

\subsection{Sketch of the proof}

\subsubsection{The case of a tree graph}
One of the main methods to study a McKay graph $\Gamma(G,\rho)$ is to observe that if we consider its adjacency matrix as an operator from the linear span of $Irr(G)$ to itself, then this operator can be realized as the multiplication by $\chi_\rho$ when we identify the linear span of $Irr(G)$ with the space of class functions on $G$.

Recall that a graph $\Gamma$ is a tree iff the following holds:
\begin{enumerate}[label=(\alph*)]
    \item it is undirected,
    \item it is connected,
    \item the number of its vertices exceeds by 1 the number of its (undirected) edges.
\end{enumerate}
If $\Gamma=\Gamma(G,\rho)$ is a McKay graph,  then the first condition is equivalent to the fact that $\rho$ is self-dual, and the second condition is equivalent to the fact that $\rho$ is faithful. So it is left to exploit the third condition. In order to do this, we prove (in Corollary \ref{cor:trace}) the following formula for a general undirected McKay graph $\Gamma = \Gamma(G, \rho)$:
\begin{equation}\label{eq:dim.formula}
2\abs{\{\text{undirected edges in } \Gamma\}}=tr(A_\Gamma^2)=\sum_{[x]\in G//G}\chi_\rho(x)^2=\sum_{[x]\in G//G}\dim \End_{C(x)}\left( \rho\downarrow_{C(x)}\right),
\end{equation}
Here
\begin{itemize}
    \item $A_\Gamma$ is the adjacency matrix of the graph $\Gamma$.
    \item $x$ ranges over (a set of representatives of) the conjugacy classes of $G$.
    \item $C(x)$ denotes the centralizer of $x$.
\end{itemize}

We use this formula in order to deduce that if $\Gamma$ is a forest then $\rho$ is irreducible and for any non-central $x \in G$, we have (see the proof of Proposition \ref{prop:forest}): \begin{equation}\label{eq:dim2}
\dim(\End(\rho\downarrow_{C(x)}))=2.    
\end{equation}

We finish the argument in the proof of Theorem \ref{introthm:trees} by considering the following cases:
\begin{enumerate}[label=(\alph*)]
    \item {\it $C(x)$ is abelian for some $x\in G$.} 
    In this case it is easy to see that $\rho$ is 2-dimensional, 
    so we can use McKay's classification (see Section \ref{ssec:intro_rel_works}).
    \item {\it $C(x)$ is non-abelian for any $x\in G$.}
    In this case we prove that $G$ is an extra special $2$-group (i.e. a  $2$-group of unipotent depth $2$ and center of order $2$). Since such groups are classified (see \cite{Aschbacher, Diaconis}), this completes the argument behind the proof of Theorem \ref{introthm:trees}.
\end{enumerate}
\subsubsection{The case of forests}
\InnaB{We give below an overview of the main ingredients in the proof of Theorem \ref{introthm:forests}.}
\InnaA{
\begin{enumerate}[label=(\alph*)]
    \item Let $N=\Ker(\rho)$ and consider the action of $G$ on $Irr(N)$ induced by the conjugationaction of $G$ on $N$. It is well-known that the set of connected components of a McKay graph $\Gamma(G,\rho)$ is naturally indexed by the set of $G$-orbits $Irr(N)//G$. The indexing is obtained by restricting a representation $\mu\in  \Gamma(G,\rho)$ to $N$ (see Lemma \ref{lem:connected_comp_Browne}).
    \item Let $\tau\in Irr(N)$ and let $T\in Irr(N)//G$ be the $G$-orbit of $\tau$. Let $\Gamma_T(G,\rho)$ be the connected component of $\Gamma(G,\rho)$ corresponding to $T$. Then we have (see Lemma \ref{lem:conn_comp_sum_of_sq}):
    
    \begin{equation}\label{eq:intro_sum_of_sq}
     \sum_{\mu\in \Gamma_T(G,\rho)} (\dim \mu)^2=|G/N||T|(\dim\tau)^2.
    \end{equation}
\item Using Remark \ref{intrormk:principal_comp}, we conclude that the principal component $\Gamma_\triv$ is either an affine Dynkin diagram of types $D$ or $E$ (in that case, $\dim(\rho)=2$), or a hedgehog with $4^n$ spines (in that case, $\dim(\rho)=2^n$). 
\item If $\dim(\rho)=2$, we use McKay's classification of graphs with spectral radius $2$ to state that all the connected components will be affine Dynkin diagrams of types $D$ or $E$.
\item If the principal component is a hedgehog, we use  \eqref{eq:intro_sum_of_sq} to show that $|T|=1$ for any $T\in Irr(N)//G$, and conclude that all the connected components in this case are isomorphic to the principal one.
\end{enumerate}
}
\InnaB{We then give some examples of McKay graphs which are forests with non-isomorphic connected components. In a follow-up paper, we plan to  \InnaC{investigate this question further.}}
\subsection{Structure of the paper}
In Section \ref{sec:prelim} we give the required preliminaries on groups and McKay graphs, including the definition of the extraspecial groups (see Section \ref{ssec:extra_special}) and the McKay correspondence between affine Dynkin diagrams and finite subgroups of $SU(2)$ (see Section \ref{ssec:mckay_class}). In Section \ref{sec:Aux} we prove some auxilary results, such as a criterion for a McKay graph \InnaB{to be  connected bipartite} and the formula $$\forall k\geq 1, \;\;
\dim \Hom_G(\rho^{\otimes k}, \C[G]^{adj}) = \abs{\{\text{circuits of length } k \text{ in } \Gamma \}},  $$
which we use later on.
In Section \ref{sec:trees} we prove Theorem \ref{introthm:trees}. In Section \ref{sec:forests} we prove Theorem \ref{introthm:forests} and construct some non-trivial examples of forests in Subsection \ref{ssec:forest_examples}. 
\subsection{Acknowledgements}
A.A. was partially supported by ISF grant 249/17 and a Minerva foundation grant. I.E.-A. was supported by the ISF grant 711/18.

\section{Preliminaries and notation}\label{sec:prelim}
The base field throughout the paper is $\C$.
\subsection{Notation}
\begin{notation}
 Let $G$ be a finite group.
 \begin{itemize}
 \item We denote by $1_G$ its unit element, by $Z(G)$ its center, by $G//G$ the set of conjugacy classes of $G$, and by $C(g)$ the centralizer of $g\in G$.
  \item We denote by $Irr(G)$ the set of its irreducible representations and by $\triv$ its trivial representation. 
  \item We denote by $\C[G]^{adj}$ the conjugation representation of $G$ on its group algebra.
  \item For any representation $\rho$ of $G$, we denote by $\chi_\rho: G\to \C$ its character, where $\chi_\rho(g):=tr (\rho(g))$. By abuse of notation, we use the same notation for the corresponding function $\chi_\rho: G//G \to \C$.
 \item We say that a group $G$ is $p$-torsion, where $p$ is prime, if $g^p = 1_G$ for each $g \in G$.
 \item Given a subgroup $H < G$, a $G$-representation $\rho$ and an $H$-representation $\tau$, we denote by $\rho \downarrow_H$ the restriction of $\rho$ to $H$ and by $\tau \uparrow^G_H$ the induction of $\tau$ to $G$.
 \end{itemize}

\end{notation}

\begin{notation}
 \InnaB{For $n\geq 2$, we will denote by $C_n$ the cyclic group of order $n$. For prime $p$, the elementary abelian group $p$-group $C_p^{\times k}$ will sometimes be denoted $\mathbb{F}_p^k$.}
\end{notation}

\begin{notation}
 \InnaB{For $n\geq 3$,} we will denote by $\mathrm{Dih}_n$ the dihedral group of order $2n$ (group of symmetries of the regular $n$-gon). \InnaB{We will also denote by $\mathrm{Dih}_2$ the group $C_2 \times C_2$ (considered as the group of symmetries of a digon).}
\end{notation}
\comment{
\begin{example}
  \InnaB{The group $\mathrm{Dih}_2:=C_2 \times C_2$ is the group of symmetries of the digon, a ``degenerate polygon" with $2$ vertices. A digon is given by two vertices connected by symmetric arcs on both sides of the line connecting the vertices (e.g. a circle with two marked antipodal vertices):
\begin{equation*}
\begin{tikzpicture}[anchorbase,scale=1]
       \draw (-1,0) to[out=40,in=140]  (1,0);
       \draw (-1,0) to[out=-40,in=-140]  (1,0);
       \node at (-1,0) {$\bullet$};
       \node at (1,0) {$\bullet$};
\end{tikzpicture}
 \end{equation*}}
\end{example}}

\begin{notation}
 Let $\Gamma$ be a directed graph.  
 \begin{itemize}
 \item We denote by $X(\Gamma)$ the set of vertices of $\Gamma$. 
 \item We denote by $\mathtt{N}(x)$ the multiset of neighbors of a vertex $x$ in $\Gamma$: those are $y\in X(\Gamma)$ with an edge $x\to y$ in $\Gamma$ (if there are several edges $x \to y$, then $y$ appears with appropriate multiplicity in $\mathtt{N}(x)$.
 \item A loop is an edge connecting a vertex to itself). 
 \item A graph $\Gamma$ is called {\it simply-laced} if for each $x, y \in X(\Gamma)$ it contains at most one edge $x \to y$.
 \item A {\it walk} in $\Gamma$ is a sequence of edges $(e_0, \ldots, e_s)$ such that the end of $e_i$ is the beginning of $e_{i+1}$ for each $i<s$ (repetitions of edges are allowed!). A walk is called a {\it circuit} if the end of $e_s$ is the beginning of $e_{0}$. A {\it path} is a walk without repetitions of edges.
 \item Given a pair of edges in opposite directions (or a loop), we will usually draw a single {\it undirected} edge instead. If all the edges in the graph are undirected, we will say that this graph is {\it undirected}. 
 \item We say that $\Gamma$ is an {\it undirected tree} if $\Gamma$ is undirected and when considered as an undirected graph, it contains no circuits (in particular, it is simply-laced and without loops!).
 \end{itemize}
 
\end{notation}
\begin{notation}
 Given a graph $\Gamma$ with vertex set $X$, we denote by $A_{\Gamma}$ the {\it adjacency matrix} of this graph. That is, $A_{\Gamma}$ is an $\abs{X} \times \abs{X}$ matrix with integer coefficients whose rows and columns are indexed by the set $X$. The $(x,y) \in X \times X$ entry of $A_{\Gamma}$ is the number of edges in $\Gamma$ from $x$ to $y$. The matrix $A_{\Gamma}$ naturally defines a linear endomorphism of the space of functions $X \to \C$, and we will identify $A_{\Gamma}$ with this endomorphism. 
\end{notation}

\comment{
\InnaA{
\begin{notation}
Let $\Gamma$ be a graph. We denote by $Spec(\Gamma)$ the spectrum (multiset of eigenvalues) of the adjancency matrix $A_\Gamma$.
\end{notation}
}}

\begin{remark}
 A graph $\Gamma$ is undirected if and only if $A_{\Gamma}$ is symmetric.
\end{remark}

\subsection{Preliminaries on extra special groups}\label{ssec:extra_special}
A special class of finite groups which appear in this paper are extra special groups. For an introduction to extra special groups, see \cite{Aschbacher} and the appendix of \cite{Diaconis}.

\begin{definition}
 Let $p$ be a prime number. A finite group $G$ is called an {\it extra special $p$-group} if $Z(G) \cong C_p$ and $G/Z(G) \cong (C_p)^N \InnaB{= \F_p^N}$ for some $N\geq 0$. 
\end{definition}
\begin{example}\label{ex:extra_special_groups}
\mbox{}
\begin{itemize}
 \item There is just one extra special group of order $p$, up to isomorphism, which is $C_p$. 
 \item There are two examples of extra special $p$-groups of order $p^3$: the 
Heisenberg group $Heis(p)$ given by upper-triangular $3\times 3$ matrices over the field 
$\mathbb{F}_p$ with $1$'s on the diagonal, and the semidirect product $C_{p^2} 
\rtimes C_p$ where $C_p$ acts non-trivially on $C_{p^2}$. These are non-isomorphic for $p>2$. 
 \item For $p=2$ the above examples coincide: we have $Heis(2)\cong C_4 \rtimes C_2 \cong \mathrm{Dih}_4$. Another (non-isomorphic) extra special group of 
order $8$ is the quaternion group $Q_8$.
 \item Given a finite-dimensional vector space $V$ over the field $\F_p$, we can 
define the generalized Heisenberg group $Heis(V)$ as follows: the underlying set 
is $V \times V^* \times \F_p$ and the multiplication is given by $$(\vec{a}, 
\vec{b}, c)(\vec{a'}, \vec{b'}, c') = (\vec{a} +\vec{a}, \vec{b}+\vec{b'}, c + 
c' + \vec{b'}\cdot\vec{a})$$ where $-\cdot -$ denotes the obvious pairing $V^* 
\times V \to \F_p$. This group $Heis(V)$ is an extra special $p$-group of order 
$p^{1+2\dim V}$.
\end{itemize}

\end{example}

It turns out that such groups exist only if $N$ is even, i.e. $N=2n$ for some 
$n\geq 0$; moreover, for each power $p^{1+2n}$ ($n\geq 1$) there exist precisely 
two isomorphism classes of extra special $p$-groups of order $p^{1+2n}$. 

For $p>2$, any extra special $p$-group of order at least $p^3$ is a central 
product of several copies of the two groups of order $p^3$ appearing in Example 
\ref{ex:extra_special_groups}. If all of the $n$ factors appearing in the 
central product are isomorphic to $Heis(p)$ then the obtained group is 
generalized Heisenberg group $Heis(\F_p^n)$ and it is $p$-torsion. If at least 
one of the $n$ factors is of the form $C_{p^2} \rtimes C_p$ then we obtain 
another, non-isomorphic extra special group of order $p^{1+2n}$, but now having 
some elements of order $p^2$.

For $p=2$, a similar situation occurs: any extra special $2$-group of order at 
least $8$ is a central product of several copies of $\mathrm{Dih}_4$ and $Q_8$. If 
all of the $n$ factors appearing in the central product are isomorphic to 
$\mathrm{Dih}_4$ then the obtained group is the generalized Heisenberg group 
$Heis(\F_2^n)$. If at least one of the $n$ factors is of the form $Q_8$ then we 
obtain another, non-isomorphic extra special group of order $2^{1+2n}$.

An extra special $p$-group $G$ of order $p^{1+2n}$ has precisely $p^{2n}$ 
representations of dimension $1$ on which the center $Z(G)$ acts trivially, and 
$p-1$ representations of dimension $p^n$ on which the center $Z(G)$ acts by a 
non-trivial character.

\subsection{Preliminaries on McKay graphs}
The definitions and statements of this subsection are taken from \cite{McKay}.
\begin{definition}[McKay graph]
Let $G$ be a finite group, $\rho$ a complex representation of $G$. The 
{\it McKay graph} $\Gamma(G, \rho)$ (sometimes called ``McKay quiver'') for the 
pair $(G, \rho)$ has vertex set $Irr(G)$; the number of edges $\mu \to \lam$, 
where $\mu,\lam \in Irr(G)$, is 
$$\dim \Hom_G(\mu \otimes \rho,\lam) = [\mu\otimes \rho:\lam]_G$$ where the 
latter denotes the multiplicity of $\lam$ in the $G$-representation $\mu\otimes 
\rho$.
\end{definition}
When drawing a McKay graph, we will usually mark by a $\bigstar$ symbol the vertex corresponding to the trivial representation. The other vertices are each marked by the dimension of the corresponding irreducible representation. 

\begin{example}

\mbox{}

 \begin{enumerate}
 \item Let $G= C_2$ and $\sgn$ be the non-trivial irreducible representation 
of $C_2$ (``sign representation''). 
 Then $\Gamma(C_2, \sgn)$ is an undirected loop-less graph on $2$ vertices:
 $$
\begin{tikzcd}[arrows=-]
 \bigstar\arrow[r] & \raisebox{.5pt}{\textcircled{\raisebox{-.9pt} {1}}}
\end{tikzcd}
$$

 \item Let $G= \Z/n\Z$ and $\tau$ be its non-trivial irreducible 
representation. 
Then $\Gamma(\Z/n\Z, \tau)$ is a directed cycle graph on $n$ vertices, without 
double edges nor loops. 
 \item \InnaB{Let $n\geq 1$ and let $G = \mathbb{F}_2^n = C_2^{\times n}$ be the corresponding elementary abelian $2$-group}. Let $\rho = \bigoplus_{0 \leq s 
\leq n-1} \triv^{\boxtimes s} \boxtimes \sgn \boxtimes \triv^{\boxtimes n-s-1}
  $. The McKay graph $\Gamma(\mathbb{F}_2^n, \rho)$ is then the $n$-dimensional cube 
$\{0, 1\}^n$ with undirected edges $\xymatrix{x \ar@{-}[r]&y}$ whenever $x,y \in \{0, 1\}^n$ differ by 
precisely one coordinate.
 \item Let $G = S_3 = \mathrm{Dih}_3$ be the symmetric group on $3$ letters. We denote its 
irreducible representations by $\triv, \sgn, \mathtt{ref}$ where $\sgn$ is the 
$1$-dimensional sign representation and $\mathtt{ref}$ is the two-dimensional reflection 
representation $\{(x_1, x_2, x_3) \in \C^3, \sum x_i =0\}$. The McKay graph 
$\Gamma(S_3, \mathtt{ref})$ is 
$$ \begin{tikzcd}[arrows=-]
 \bigstar\arrow[r] & \raisebox{.5pt}{\textcircled{\raisebox{-.9pt} {2}}} \arrow[ld] \arrow[loop right]{r}\\
  \raisebox{.5pt}{\textcircled{\raisebox{-.9pt} {1}}} &{}
\end{tikzcd}
$$
 This is an undirected graph on $3$ vertices, with a single loop.
 \item Consider the dihedral group $\mathrm{Dih}_{4}$ and let $\tau$ be its tautological $2$-dimensional (irreducible) 
representation. The McKay graph of $(\mathrm{Dih}_{4}, \tau)$ looks as follows:
$$
\begin{tikzcd}[arrows=-]
    \bigstar&& \raisebox{.5pt}{\textcircled{\raisebox{-.9pt} {1}}}\\
&\ \raisebox{.5pt}{\textcircled{\raisebox{-.9pt} {2}}}\arrow[ru]\arrow[lu]\arrow[ld]\arrow[rd]\\
 \raisebox{.5pt}{\textcircled{\raisebox{-.9pt} {1}}} && \raisebox{.5pt}{\textcircled{\raisebox{-.9pt} {1}}}
\end{tikzcd}
$$
Here the vertex in the center corresponds to $\tau$.
 This graph coincides with the McKay graph for $(Q_8, \mu)$ where $Q_{8}$ is the 
quaternion group and $\mu$ is the obvious $2$-dimensional (irreducible) 
representation given by $\mathbb{H} \cong \C^2$. This coincidence is not 
surprising, given that the character tables of $\mathrm{Dih}_{4}$, $Q_8$ coincide.
 \end{enumerate}
\end{example}

\begin{example}\label{ex:extra_special}
Consider an extra-special $2$-group $G$ of order $2^{1+2n}$, $n\geq 0$ as 
described in Section \ref{ssec:extra_special} (for $n=0$, the corresponding extra special group is unique, and it is just $C_2$). Then $G$ has $2^{2n}+1$ 
irreducible representations: $2^{2n}$ irreducible representations of dimension 
$1$ coming from $G/Z(G) \cong \mathbb{F}_2^{2n}$ and one irreducible representation 
$\rho$ of dimension $2^n$. The McKay graph for $(G, \rho)$ is an unoriented 
connected graph of th following form: it has one vertex $\rho$ with $\InnaA{4}^n$ 
unoriented edges coming out, and $\InnaA{4}^n$ vertices connected only to $\rho$ by a 
single unoriented edge. \InnaB{We will call such a graph {\it a hedgehog with $4^n$ spines}.} Below is an example for $n=2$: 
$$
\begin{tikzcd}[arrows=-]
    \bigstar& \raisebox{.5pt}{\textcircled{\raisebox{-.9pt} {1}}}& \raisebox{.5pt}{\textcircled{\raisebox{-.9pt} {1}}}& \raisebox{.5pt}{\textcircled{\raisebox{-.9pt} {1}}}& \raisebox{.5pt}{\textcircled{\raisebox{-.9pt} {1}}}\\
     \raisebox{.5pt}{\textcircled{\raisebox{-.9pt} {1}}}&&&& \raisebox{.5pt}{\textcircled{\raisebox{-.9pt} {1}}}\\
 \raisebox{.5pt}{\textcircled{\raisebox{-.9pt} {1}}}&& {\Large\raisebox{.5pt}{\textcircled{\raisebox{-.9pt} {\normalsize $2^n$}}}}\arrow[rr]\arrow[rru]\arrow[rruu]\arrow[ruu]\arrow[uu]\arrow[ll]
\arrow[llu]\arrow[lluu]\arrow[luu]\arrow[rrd]\arrow[rrdd]\arrow[rdd]\arrow[dd]
\arrow[lld]\arrow[lldd]\arrow[ldd]&& \raisebox{.5pt}{\textcircled{\raisebox{-.9pt} {1}}}\\
     \raisebox{.5pt}{\textcircled{\raisebox{-.9pt} {1}}}&&&& \raisebox{.5pt}{\textcircled{\raisebox{-.9pt} {1}}}\\
     \raisebox{.5pt}{\textcircled{\raisebox{-.9pt} {1}}}& \raisebox{.5pt}{\textcircled{\raisebox{-.9pt} {1}}}& \raisebox{.5pt}{\textcircled{\raisebox{-.9pt} {1}}}& \raisebox{.5pt}{\textcircled{\raisebox{-.9pt} {1}}}& \raisebox{.5pt}{\textcircled{\raisebox{-.9pt} {1}}}
\end{tikzcd}
$$
\end{example}

The following statement is well-known (see \cite[Proposition 2]{McKay}):
\begin{lemma}\label{lem:undirected}
 Let $\rho$ be a representation of the group $G$. Then $\Gamma(G, \rho^*)$ is 
obtained from $\Gamma(G, \rho)$ by a reversal of arrows (i.e. $A_{\Gamma(G, 
\rho^*)} = A_{\Gamma(G, \rho)}^T$). In particular, $\rho\cong \rho^*$ if and 
only if $\Gamma(G, \rho)$ is undirected (i.e. $A_{\Gamma(G, \rho)}$ is a 
symmetric matrix).
\end{lemma}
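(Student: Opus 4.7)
The plan is to translate both sides of the claimed identity $A_{\Gamma(G,\rho^*)} = A_{\Gamma(G,\rho)}^T$ into character-theoretic inner products and check that they agree by a complex-conjugation argument.

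First I would fix irreducible representations $\mu, \lambda \in Irr(G)$ and write down the two matrix entries I need to compare: the $(\mu, \lambda)$-entry of $A_{\Gamma(G,\rho^*)}$, which equals $\dim \Hom_G(\mu \otimes \rho^*, \lambda)$, and the $(\lambda, \mu)$-entry of $A_{\Gamma(G,\rho)}$, which equals $\dim \Hom_G(\lambda \otimes \rho, \mu)$. Using the standard formula $\dim \Hom_G(U,V) = \tfrac{1}{|G|}\sum_g \overline{\chi_U(g)}\chi_V(g)$ together with $\chi_{\rho^*} = \overline{\chi_\rho}$, both entries become triple sums
\[
\tfrac{1}{|G|}\sum_g \overline{\chi_\mu(g)}\,\chi_\rho(g)\,\chi_\lambda(g) \quad\text{and}\quad \tfrac{1}{|G|}\sum_g \overline{\chi_\lambda(g)}\,\overline{\chi_\rho(g)}\,\chi_\mu(g),
\]
respectively. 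These two sums are complex conjugates of one another; since each of them is the dimension of a Hom space and therefore a real (nonnegative integer), they must be equal. This proves $A_{\Gamma(G,\rho^*)} = A_{\Gamma(G,\rho)}^T$. (An alternative route, which I would mention as a sanity check, is to use the adjunction $\Hom_G(\mu \otimes \rho^*, \lambda) \cong \Hom_G(\mu, \lambda \otimes \rho)$ together with the fact that over $\C$, for any finite-dimensional $G$-representations $V,W$ one has $\dim V^G = \dim (V^*)^G$, coming from complete reducibility plus Schur.)

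For the ``in particular'' statement, the forward direction is immediate: if $\rho \cong \rho^*$ then $\Gamma(G,\rho) = \Gamma(G,\rho^*)$, so combining with the transpose identity gives $A_{\Gamma(G,\rho)} = A_{\Gamma(G,\rho)}^T$. For the converse, assume $A_{\Gamma(G,\rho)}$ is symmetric. Then $A_{\Gamma(G,\rho^*)} = A_{\Gamma(G,\rho)}^T = A_{\Gamma(G,\rho)}$, so in particular the row indexed by $\mu = \triv$ agrees in both matrices. But that row records, for each $\lambda \in Irr(G)$, the multiplicity $[\rho : \lambda]$ on one side and $[\rho^* : \lambda]$ on the other, so $\rho$ and $\rho^*$ have the same isotypic decomposition and hence are isomorphic.

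I do not anticipate a genuine obstacle here; the only subtle point is being careful that the two triple sums really are complex conjugates (rather than literally equal) and invoking the reality of the dimension to conclude. Once that observation is in hand, the lemma is essentially a one-line character computation together with the trivial specialization $\mu = \triv$ to recover the isomorphism class of $\rho$ from the first row of $A_{\Gamma(G,\rho)}$.
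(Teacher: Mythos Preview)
Your proof is correct in all details: the character computation showing the two entries are complex conjugates, the reality argument, and the specialization $\mu=\triv$ for the converse are all sound. The paper itself does not prove this lemma at all---it is stated as well-known with a reference to \cite[Proposition 2]{McKay}---so there is nothing to compare; your argument supplies a complete self-contained proof where the paper offers none.
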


The spectrum of the adjacency matrix of a McKay graph has an explicit description, see 
\cite[Proposition 6]{McKay}, \cite[Proposition 2.3]{Browne}, and 
\cite{Steinberg}:

\begin{proposition}\label{prop:spectra_adj_matrix}
 The eigenvalues of $A_{\Gamma(G, \rho)}$ form the multiset $$\{\chi_{\rho}(g) |g\in G//G\}.$$ A corresponding eigenvector for the 
eigenvalue $\chi_{\rho}(g) $ is $\chi(g):Irr(G) \to \C, \, \mu \mapsto 
\chi_{\mu}(g)$ (a column of the character table).
\end{proposition}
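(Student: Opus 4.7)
My plan is to exhibit a full eigenbasis of $A_{\Gamma(G,\rho)}$ indexed by the conjugacy classes of $G$. For each $[g]\in G//G$, the candidate eigenvector is exactly the function $\chi(g):Irr(G)\to\C$, $\mu\mapsto\chi_\mu(g)$, appearing in the statement; I would verify that $\chi(g)$ is an eigenvector with eigenvalue $\chi_\rho(g)$, and then use counting plus the second orthogonality relation to conclude that these eigenvectors span.

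The verification of the eigenvalue equation is the heart of the argument and reduces to one identity between class functions. Expanding $\mu\otimes\rho$ in the Grothendieck ring of $G$-representations and passing to characters gives $\chi_\mu \chi_\rho = \sum_{\lambda\in Irr(G)} [\mu\otimes\rho:\lambda]\,\chi_\lambda$. Evaluating at $g$ and recognizing the multiplicities as the entries of the adjacency matrix, $A_{\mu,\lambda}=[\mu\otimes\rho:\lambda]$, yields
$$
(A_{\Gamma(G,\rho)}\chi(g))(\mu)=\sum_{\lambda}A_{\mu,\lambda}\chi_\lambda(g)=\chi_\mu(g)\chi_\rho(g)=\chi_\rho(g)\cdot\chi(g)(\mu),
$$
which is exactly what we need. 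Conceptually, this says that $A_{\Gamma(G,\rho)}$ is intertwined with multiplication by $\chi_\rho$ on the space of class functions under the identification given by the character table; multiplication by $\chi_\rho$ is diagonal in the basis of indicator functions of conjugacy classes, with diagonal entries $\chi_\rho(g)$.

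To finish, I would observe that $|Irr(G)|=|G//G|$ and that the character table of $G$ is invertible (second orthogonality relation). Since the vectors $\chi(g)$ are precisely its columns, they are linearly independent, and therefore form a basis of $\C^{Irr(G)}$ consisting of eigenvectors of $A_{\Gamma(G,\rho)}$. Consequently the multiset of eigenvalues is exactly $\{\chi_\rho(g):[g]\in G//G\}$, with the correct multiplicities. I do not expect any serious obstacle: the statement is essentially a translation of the well-known fact that the center of $\C[G]$ is simultaneously diagonalized by characters and by class sums. The only small care needed is a convention check on whether $A_\Gamma$ or its transpose realizes multiplication by $\chi_\rho$, which does not affect the spectrum since $\{\chi_{\rho^*}(g)\}=\{\overline{\chi_\rho(g)}\}=\{\chi_\rho(g^{-1})\}$ agrees with $\{\chi_\rho(g)\}$ as multisets over $G//G$.
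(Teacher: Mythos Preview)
Your proposal is correct and is the standard argument for this well-known fact. Note that the paper does not actually supply its own proof of this proposition: it is stated as a preliminary result with references to \cite{McKay}, \cite{Browne}, and \cite{Steinberg}, so there is no proof in the paper to compare against; your argument is essentially the one found in those references.
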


Since $\chi_{\rho}(g), g\in G$ is the sum of eigenvalues of $\rho(g)$, all of 
which are roots of unity, we have:
\begin{corollary}\label{cor:spectral_radius_adj_matrix}
 The eigenvalue of $A_{\Gamma(G, \rho)}$ with the maximal absolute value (the 
{\it spectral radius} of $A_{\Gamma(G, \rho)}$) is $\chi_\rho(1_G) = \dim \rho$. 
A corresponding eigenvector is then $$\dim:Irr(G) \to \C, \, \mu \mapsto 
\dim(\mu).$$
\end{corollary}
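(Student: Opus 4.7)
The plan is to deduce this corollary as a direct consequence of Proposition \ref{prop:spectra_adj_matrix}, supplemented by one elementary observation about characters.

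First I would recall that Proposition \ref{prop:spectra_adj_matrix} describes the spectrum of $A_{\Gamma(G,\rho)}$ as the multiset $\{\chi_\rho(g) : [g] \in G//G\}$. To identify the eigenvalue of maximal absolute value, I would bound $|\chi_\rho(g)|$ uniformly in $g \in G$. Since $G$ is finite, each element $g$ has finite order, so $\rho(g)$ is an operator of finite order on a $\dim\rho$-dimensional complex vector space; consequently its eigenvalues are roots of unity, each of absolute value $1$. Writing $\chi_\rho(g)$ as the sum of these $\dim\rho$ eigenvalues and applying the triangle inequality yields $|\chi_\rho(g)| \leq \dim\rho$, with equality at $g = 1_G$, where every eigenvalue of $\rho(1_G) = \id$ is equal to $1$. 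This shows that $\chi_\rho(1_G) = \dim\rho$ is an eigenvalue of $A_{\Gamma(G,\rho)}$ of maximal absolute value among all eigenvalues.

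For the second assertion, I would simply specialize the eigenvector formula from Proposition \ref{prop:spectra_adj_matrix} to $g = 1_G$. That proposition supplies, for each $g$, an explicit eigenvector $\mu \mapsto \chi_\mu(g)$ attached to the eigenvalue $\chi_\rho(g)$. Evaluated at $g = 1_G$ this becomes $\mu \mapsto \chi_\mu(1_G) = \dim\mu$, which is exactly the eigenvector claimed in the corollary.

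I do not anticipate any real obstacle: the entire content is already packaged in Proposition \ref{prop:spectra_adj_matrix}, and what remains is a one-line application of the triangle inequality to a sum of roots of unity together with a specialization $g = 1_G$. The only point worth being explicit about is the interpretation of ``spectral radius'' as the maximum of $|\lambda|$ over eigenvalues $\lambda$; since the value $\dim\rho$ is a positive real number, it is attained as an honest eigenvalue and no distinction between the radius and the eigenvalue is lost.
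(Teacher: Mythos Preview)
Your proposal is correct and matches the paper's approach exactly: the paper simply remarks, just before stating the corollary, that $\chi_\rho(g)$ is a sum of eigenvalues of $\rho(g)$, all of which are roots of unity, and deduces the corollary immediately from Proposition~\ref{prop:spectra_adj_matrix}. Your write-up is a faithful and slightly more detailed rendering of precisely this argument.
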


The spectral radius of a real matrix $A$ with non-negative entries has an important property given by the Frobenius-Perron theorem: the corresponding eigenspace of $A$ contains a vector all of whose entries are non-negative. 

In fact, if $A$ is an ``irreducible'' matrix (one which cannot be presented as a block matrix with non-trivial blocks), the spectral radius has multiplicity $1$ as an eigenvalue of $A$, and it is the only eigenvalue of $A$ for which an eigenvector with positive entries exists. For example, the adjacency matrix of a {\it strongly connected} graph $\Gamma$ (one where there is a directed walk from each vertex to any other vertex) is always irreducible, and hence its spectral radius has multiplicity $1$.

\subsection{Shapes of McKay graphs} Let us give some basic examples of graphs which can or cannot appear as McKay 
graphs $\Gamma(G, \rho)$ for some $G, \rho$.
\begin{example}
\begin{enumerate}
\item Given a group $G$ and a representation $\rho$ of $G$, the representation 
$\rho$ is $1$-dimensional if and only if the McKay graph $\Gamma(G, \rho)$ is a 
disjoint union of directed cycles (some of the cycles might consist 
of $1$ vertex only, with a single loop). 

Any disjoint union of directed cycles of equal size can appear as a McKay 
graph: 
for example, if we have $k$ disjoint directed cycles of size $n$, this is the 
McKay graph for $(\Z/n\Z \times G, \rho\boxtimes \triv)$ where $\tau$ is an 
irreducible representation of $\Z/n\Z$, and $G$ is any finite group with 
precisely $k$ irreducible representations (e.g. $G=\Z/k\Z$). 
 \item  A McKay graph $\Gamma(G, \rho)$ cannot be a directed tree or a 
disjoint union of such: indeed, one cannot have ``leaves'' (i.e. vertices with 
only one edge coming in or going out) in a directed McKay graph.
 \item Considering the opposite extreme situation, a complete (undirected, 
loop-less) graph on $n$ vertices - such a graph can be obtained as 
$\Gamma(\Z/n\Z, \rho)$. Here $\rho$ is the representation of $\Z/n\Z$ on 
$\C^n/span\{(1,\ldots, 1)\}$, where $\Z/n\Z$ acts on $\C^n$ by shifting cyclically the 
coordinates. 
 \item A complete graph on $n$ vertices ($n>2$) cannot be obtained as a McKay 
graph $\Gamma(G, \rho)$ if we require $\rho \in Irr(G)$. This is due to the 
fact that in any McKay graph $\Gamma(G, \rho)$ with $\rho \in Irr(G)$, the 
vertex $\triv$ has only one neighbor: the vertex $\rho$.
 
 Yet a given a complete (undirected, loop-less) graph on $n$ vertices, we can 
always find a McKay graph $\Gamma(G, \rho)$  containing our complete graph as a 
sub-graph: for instance, take $G = S_N$ (the symmetric group) for $N>>n$ and 
$\rho$ to be the reflection representation.
\end{enumerate}

\end{example}

\subsection{Connected components in McKay graphs}

Given two vertices $\mu, \lam$ in a McKay graph, we can ask whether there is a directed walk from $\mu$ to $\lam$, or whether there is a walk from $\mu$ to $\lam$ if we forget about the directions of the edges in our graph. Although for general graphs these relations do not coincide, it turns out that in McKay graphs they do (see \cite[Section 3]{Browne}), so the notion of a connected component in a McKay graph is intuitive; each such component is ``strongly connected''.

Given a McKay graph $\Gamma(G, \rho)$, the connected component $\Gamma_{\triv}$ containing the vertex $\triv$ is called the {\it principal component}.

The following statement is well-known, see for example \cite[Proposition 1]{McKay}, \cite[Proposition 3.3]{Browne}:
\begin{lemma}\label{lem:connectivity}
 Let $\rho$ be a representation of the group $G$. Then $\rho$ is faithful if and 
only if $\Gamma(G, \rho)$ is connected (that is, for any $\mu, \lam \in Irr(G)$ 
there exists a walk from $\mu$ to $\lam$ in $\Gamma(G, \rho)$).
\end{lemma}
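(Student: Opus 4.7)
The plan is to treat the two directions separately, with Burnside's classical theorem on tensor powers of faithful representations doing the heavy lifting in the forward direction.

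\textbf{Forward direction ($\rho$ faithful $\Rightarrow$ $\Gamma(G,\rho)$ connected).} First I would observe that a directed walk from $\mu$ to $\lambda$ in $\Gamma(G,\rho)$ of length $k$ exists if and only if $\lambda$ occurs as a constituent of $\mu \otimes \rho^{\otimes k}$, since the number of such walks equals $\dim\Hom_G(\mu\otimes\rho^{\otimes k},\lambda)$ (expand the adjacency matrix power). Taking $\mu=\triv$, a walk from $\triv$ to $\lambda$ exists iff $\lambda$ is a constituent of some $\rho^{\otimes k}$. I would then invoke the classical theorem of Burnside: if $\rho$ is a faithful complex representation of a finite group $G$, then every irreducible representation of $G$ appears as a constituent of some tensor power $\rho^{\otimes k}$. (The standard proof uses a Vandermonde argument on the distinct values of $\chi_\rho$ to show that $1,\chi_\rho,\chi_\rho^2,\ldots$ span the space of class functions.) This yields a directed walk $\triv \rightsquigarrow \lambda$ for every $\lambda\in Irr(G)$. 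Applying the same argument to $\rho^*$, which is also faithful since $\ker(\rho^*)=\ker(\rho)$, and using Lemma \ref{lem:undirected} which identifies $\Gamma(G,\rho^*)$ with $\Gamma(G,\rho)$ with reversed arrows, I obtain a directed walk $\lambda \rightsquigarrow \triv$ for every $\lambda$. Concatenating these gives a directed walk $\mu \rightsquigarrow \triv \rightsquigarrow \lambda$ for any pair $\mu,\lambda$, so $\Gamma(G,\rho)$ is strongly connected, in particular connected.

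\textbf{Reverse direction ($\rho$ not faithful $\Rightarrow$ $\Gamma(G,\rho)$ disconnected).} Set $N:=\ker(\rho)\neq \{1_G\}$ and consider the subset $S:=\{\mu \in Irr(G) : N\subseteq \ker(\mu)\}\subsetneq Irr(G)$ (proper since $N\neq\{1_G\}$ forces some irreducible character to be nontrivial on $N$). I would check that $S$ is a union of connected components of $\Gamma(G,\rho)$: if $\alpha \in S$ and there is an edge $\alpha\to \beta$, then $\beta$ is a constituent of $\alpha\otimes\rho$, and since both $\alpha$ and $\rho$ act trivially on $N$, so does every irreducible constituent of $\alpha\otimes\rho$; hence $\beta \in S$. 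Conversely if there is an edge $\alpha\to\beta$ with $\beta\in S$, then Frobenius reciprocity gives $\alpha$ as a constituent of $\beta\otimes\rho^*$, and since $\rho^*$ also has $N$ in its kernel, $\alpha\in S$. Thus $S$ is closed under taking edges in either direction, so it is a union of undirected (equivalently, strong) connected components. Since $S$ is a proper nonempty subset of the vertex set $Irr(G)$, the graph is disconnected.

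\textbf{Anticipated obstacle.} The only non-formal ingredient is Burnside's theorem on faithful representations; everything else is a bookkeeping exercise with the tensor-product/Hom interpretation of edges and with the behavior of kernels under tensor products. I expect no genuine difficulty beyond citing Burnside and carefully noting that the argument for $\rho^*$ (needed for strong connectivity in the directed sense) goes through with Lemma \ref{lem:undirected}.
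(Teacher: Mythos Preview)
Your argument is correct in both directions. The paper itself does not prove this lemma; it simply records it as well-known and cites \cite[Proposition~1]{McKay} and \cite[Proposition~3.3]{Browne}. The route you take---Burnside's theorem on tensor powers of a faithful representation for the forward implication, and closure of the set $\{\mu : N\subseteq\ker\mu\}$ under edges for the reverse---is exactly the standard proof found in those references, so there is no meaningful divergence to discuss.
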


In fact, given any representation $\rho$ of $G$ with kernel $N = \Ker(\rho)$, the principal component $\Gamma_{\triv}$ is isomorphic to $\Gamma(G/N, \rho)$. 
The connected components of $\Gamma(G, \rho)$ correspond to blocks in the adjacency matrix $A_{\Gamma(G, \rho)}$; the number of such blocks is then the multiplicity of the largest eigenvalue, implying the following lemma (see also \cite[Proposition 3.10, Lemma 4.1]{Browne}):
\begin{lemma}\label{lem:connected_comp_Browne} 
\mbox{}
 \begin{enumerate}
  \item The number of connected components of $\Gamma(G, \rho)$ is precisely the number of $G$-conjugacy classes of $N$; the latter equals the number of $G$-orbits in $Irr(G)$ ($G$ acts on $Irr(N)$ twisting the action of $N$ on the representations).
  
  In fact, there is a bijection
  $$\{\text{ connected components of } \Gamma(G, \rho)\} \longrightarrow Irr(N)//G$$
  given as follows: to every connected component $\Gamma'$ of $\Gamma(G, \rho)$ corresponds a unique $G$-orbit $T \subset Irr(N)$ such that $[\mu \downarrow^G_N: \tau] \neq 0$ for some $\mu \in X(\Gamma_T) \subset Irr(G)$ and some $\tau \in T \subset N$.
  \item The group $\widehat{G}$ of characters $G \to \C^{\times}$ acts on the graph $\Gamma(G, \rho)$ by tensoring each vertex with the appropriate $1$-dimensional representation.
 \end{enumerate}

\end{lemma}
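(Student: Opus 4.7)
The plan for part (1) is to construct the bijection explicitly using Clifford theory and then match it against a count of connected components obtained via the spectrum of $A_\Gamma$.

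I would first define $f:Irr(G)\to Irr(N)//G$ by sending $\mu\in Irr(G)$ to the $G$-orbit of irreducible constituents of $\mu\downarrow_N$; by Clifford's theorem these constituents form a single $G$-orbit, so $f$ is well defined. Next, I would check that $f$ is constant on each connected component of $\Gamma(G,\rho)$: if $\lambda$ is a neighbor of $\mu$, then $\lambda$ is a constituent of $\mu\otimes\rho$, and since $N=\Ker(\rho)$ acts trivially on $\rho$, we obtain $\lambda\downarrow_N\hookrightarrow (\mu\downarrow_N)^{\oplus\dim\rho}$, forcing $f(\mu)=f(\lambda)$. Surjectivity of $f$ is immediate from Frobenius reciprocity: for any $\tau\in Irr(N)$, any irreducible constituent of $\tau\uparrow^G_N$ maps to the orbit of $\tau$.

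To promote surjectivity to a bijection onto the set of connected components, I would count. For any connected component $\Gamma'$, the restriction to $X(\Gamma')$ of the positive eigenvector $\dim:Irr(G)\to\C$ from Corollary \ref{cor:spectral_radius_adj_matrix} is again a positive eigenvector of eigenvalue $\dim\rho$; combined with the strong connectedness of $\Gamma'$ (noted before Lemma \ref{lem:connectivity}), the Frobenius--Perron theorem gives this eigenvalue multiplicity exactly one per component. Hence the number of components equals the multiplicity of $\dim\rho$ in the spectrum of $A_\Gamma$, which by Proposition \ref{prop:spectra_adj_matrix} is the number of $[g]\in G//G$ with $\chi_\rho(g)=\dim\rho$. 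Since the eigenvalues of $\rho(g)$ are roots of unity, this condition forces $\rho(g)=\id$, i.e.\ $g\in N$; so the count equals the number of $G$-conjugacy classes contained in $N$. Finally, Brauer's permutation lemma applied to the $G$-action on $N$ identifies this with $|Irr(N)//G|$, matching the image of $f$, and hence each nonempty fiber of $f$ must be a single component.

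Part (2) is formal: for any $\chi\in\widehat{G}$, tensoring by $\chi$ permutes $Irr(G)$, and the identity
\begin{equation*}
\dim\Hom_G(\chi\otimes\mu\otimes\rho,\chi\otimes\lambda)=\dim\Hom_G(\mu\otimes\rho,\lambda)
\end{equation*}
shows that tensoring preserves edge multiplicities. The main obstacle is the spectral count in the previous paragraph: one has to carefully combine the Frobenius--Perron theorem with strong connectedness of each component to equate the number of components with the multiplicity of $\dim\rho$ as an eigenvalue, and then invoke Brauer's permutation lemma to pass from $G$-conjugacy classes in $N$ to $G$-orbits on $Irr(N)$.
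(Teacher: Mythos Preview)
Your argument is correct. The paper does not actually supply a proof of this lemma; it only gives the one-line sketch ``the number of such blocks is then the multiplicity of the largest eigenvalue'' and defers to \cite[Proposition 3.10, Lemma 4.1]{Browne}. Your spectral count via Frobenius--Perron is exactly an expansion of that sketch, and you correctly supply the two ingredients the paper omits: the Clifford-theoretic map $f$ that realizes the bijection explicitly, and Brauer's permutation lemma to pass from $G$-classes inside $N$ to $G$-orbits on $Irr(N)$. So your approach is the same as the paper's in spirit, just carried out in full.

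One small wording issue worth tightening: when you conclude ``each nonempty fiber of $f$ must be a single component,'' make explicit that $f$ descends to a surjection from the set of connected components to $Irr(N)//G$ (since you proved $f$ is constant on components), and then the equality of cardinalities forces this induced map to be a bijection. As written, ``fiber of $f$'' could be read as a subset of $Irr(G)$ rather than of the set of components.
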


\begin{corollary}\label{cor:conn_comp_isomorphic}
 Let $\Gamma'$ be a connected component of $\Gamma(G, \rho)$ with a vertex $\mu$ such that $\dim \mu=1$. Then the graph $\Gamma'$ is isomorphic to the principal component $\Gamma_{\triv}$.
\end{corollary}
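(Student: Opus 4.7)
The plan is to deduce this as an immediate consequence of Lemma \ref{lem:connected_comp_Browne}(2), which tells us that the character group $\widehat{G} = \Hom(G, \C^\times)$ acts on the McKay graph $\Gamma(G,\rho)$ by graph automorphisms, where a character $\chi \in \widehat{G}$ sends a vertex $\lam \in Irr(G)$ to the vertex $\lam \otimes \chi \in Irr(G)$.

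Here the hypothesis $\dim \mu = 1$ is precisely what we need: it ensures $\mu \in \widehat{G}$, so we may act by $\mu$ on the graph $\Gamma(G,\rho)$. First I would observe that the resulting graph automorphism sends the trivial representation $\triv$ to $\triv \otimes \mu = \mu$. Since a graph automorphism maps connected components to connected components, it sends the principal component $\Gamma_{\triv}$ (the component containing $\triv$) bijectively onto the connected component containing $\mu$, which by hypothesis is $\Gamma'$. This bijection of vertex sets preserves adjacency and multiplicities of edges, so it is an isomorphism $\Gamma_{\triv} \xrightarrow{\sim} \Gamma'$.

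There is essentially no obstacle; the corollary is a direct unwrapping of the $\widehat{G}$-action stated in Lemma \ref{lem:connected_comp_Browne}(2). The only point worth mentioning explicitly in the write-up is the justification that tensoring by a $1$-dimensional representation really does give a graph automorphism, namely the identity $\dim \Hom_G(\lam \otimes \rho, \nu) = \dim \Hom_G((\lam \otimes \mu) \otimes \rho, \nu \otimes \mu)$, which follows from tensoring both sides of the Hom by $\mu^{-1}$ (or equivalently, from the fact that $-\otimes \mu$ is an auto-equivalence of the category of $G$-representations compatible with tensoring by $\rho$). Since this is already stated as part of Lemma \ref{lem:connected_comp_Browne}, the proof reduces to the one-sentence observation above.
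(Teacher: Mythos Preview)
Your argument is correct and is exactly the intended one: the paper states this as a corollary of Lemma~\ref{lem:connected_comp_Browne} without proof, and the only way it follows is via part~(2), namely that tensoring by the one-dimensional $\mu\in\widehat{G}$ gives a graph automorphism carrying $\triv$ to $\mu$ and hence $\Gamma_{\triv}$ onto $\Gamma'$.
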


In fact, we have the following useful identity:
\begin{lemma}\label{lem:conn_comp_sum_of_sq}
  Let $T \in Irr(N)//G$ and $\Gamma_T$ the corresponding connected component in $\Gamma(G, \rho)$. Let $\tau \in T$. We have:
 $$\sum_{\mu \in X(\Gamma_T)}(\dim \mu)^2 = \abs{G/N}\cdot (\dim\tau)^2 \cdot \abs{T}. $$
\end{lemma}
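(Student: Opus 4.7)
The plan is to use Clifford's theorem together with Frobenius reciprocity. By the bijection in Lemma \ref{lem:connected_comp_Browne}, the vertex set $X(\Gamma_T)$ consists exactly of those $\mu \in Irr(G)$ such that $\tau$ appears (with positive multiplicity) in $\mu\downarrow^G_N$. So the task is to compute $\sum_\mu (\dim\mu)^2$ as $\mu$ ranges over irreducibles whose restriction to $N$ lies in the $G$-orbit $T$.

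First I would apply Clifford's theorem to the normal subgroup $N \lhd G$: for each $\mu \in X(\Gamma_T)$, the restriction decomposes as
\[
\mu\downarrow^G_N \;=\; e_\mu \cdot \bigoplus_{\tau' \in T} \tau',
\]
where $e_\mu := [\mu\downarrow^G_N : \tau]$ is the common multiplicity of every $G$-conjugate of $\tau$ (since $G$ permutes the isotypic components transitively within an orbit). Taking dimensions yields the key identity
\[
\dim \mu \;=\; e_\mu \cdot |T| \cdot \dim \tau.
\]

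Next I would invoke Frobenius reciprocity applied to $\tau\uparrow^G_N$: for any $\mu \in Irr(G)$,
\[
[\tau\uparrow^G_N : \mu] \;=\; [\tau : \mu\downarrow^G_N] \;=\; \begin{cases} e_\mu & \text{if } \mu \in X(\Gamma_T),\\ 0 & \text{otherwise.}\end{cases}
\]
Hence $\tau\uparrow^G_N = \bigoplus_{\mu \in X(\Gamma_T)} e_\mu \, \mu$. Comparing dimensions gives
\[
|G/N|\cdot \dim \tau \;=\; \dim(\tau\uparrow^G_N) \;=\; \sum_{\mu \in X(\Gamma_T)} e_\mu \, \dim\mu.
\]

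Finally, multiplying this equality by $|T|\cdot \dim\tau$ and applying the Clifford identity $\dim\mu = e_\mu |T| \dim\tau$ term-by-term on the right hand side produces
\[
|G/N| \cdot (\dim\tau)^2 \cdot |T| \;=\; \sum_{\mu \in X(\Gamma_T)} e_\mu \cdot |T|\cdot \dim\tau \cdot \dim\mu \;=\; \sum_{\mu \in X(\Gamma_T)} (\dim\mu)^2,
\]
which is the desired identity. No real obstacle is anticipated; the only subtlety is verifying that ``$\mu \in X(\Gamma_T)$'' is the correct characterization of the irreducibles whose restriction contains $\tau$, which is precisely the content of the bijection in Lemma \ref{lem:connected_comp_Browne}.
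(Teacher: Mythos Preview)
Your proof is correct and follows essentially the same approach as the paper: both use Clifford's theorem to express $\dim\mu$ in terms of the multiplicity $e_\mu = [\mu\downarrow^G_N:\tau]$, then apply Frobenius reciprocity and compute the dimension of $\tau\uparrow^G_N$ to obtain the identity. The only cosmetic difference is that the paper substitutes $e_\mu = \dfrac{\dim\mu}{\dim\tau \cdot |T|}$ directly into the dimension count, whereas you multiply through by $|T|\cdot\dim\tau$ at the end.
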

\begin{proof}

Given $\mu\in Irr(G)$, the multiplicity $[\mu \downarrow^G_N: \tau]$ does not depend on choice of $\tau\in T$ by Clifford's theorem, and $[\mu \downarrow^G_N: \tau']=0$ for all $\tau'\notin T$. Hence for $\mu \in X(\Gamma_T)$, we have: $ \dim \mu = \dim \tau \cdot \abs{T} \cdot [\mu \downarrow^G_N: \tau] $. In other words, $$\InnaC{[\tau\uparrow^G_N: \mu]}=[\mu \downarrow^G_N: \tau] = \frac{\dim \mu }{\dim \tau \cdot \abs{T}}.$$ 

Consider $\tau\uparrow^G_N$; by Lemma \ref{lem:connected_comp_Browne}, all its irreducible direct summands are in $X(\Gamma_T)$. We have: 
\begin{align*}
 \abs{G/N}\cdot\dim \tau  = \dim (\tau\uparrow^G_N) = \sum_{\mu \in X(\Gamma_T)}[\tau\uparrow^G_N: \mu]\dim \mu  = \sum_{\mu \in X(\Gamma_T)}\frac{(\dim \mu )^2}{\dim \tau  \cdot \abs{T}}
\end{align*}
The required statement is proved.
 
\end{proof}

\section{Auxiliary results}\label{sec:Aux}

\subsection{McKay graphs and characters}
Let $\Gamma := \Gamma(G, \rho)$ be the McKay graph for a finite group $G$ and a representation $\rho$ of $G$. 
\begin{lemma}\label{lem:trace}
 Let $k\geq 1$. We have: 
 $$
\dim \Hom_G(\rho^{\otimes k}, \C[G]^{adj}) = \sum_{g \in G//G} \chi_\rho(g)^k = tr(A^k_{\Gamma}) = \abs{\{\text{circuits of length } k \text{ in } \Gamma \}}.  $$

\end{lemma}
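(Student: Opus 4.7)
The proof splits naturally into three equalities, and each can be handled independently. I would organize the argument as follows.

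\medskip

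For the combinatorial equality $\mathrm{tr}(A^k_\Gamma) = \abs{\{\text{circuits of length } k \text{ in } \Gamma \}}$, I would invoke the standard fact that the $(\mu,\lam)$-entry of $A^k_\Gamma$ equals the number of walks of length $k$ from $\mu$ to $\lam$ in $\Gamma$ (this is immediate by induction on $k$ from the definition of matrix multiplication and of the adjacency matrix). Taking the trace sums over the diagonal entries, each of which counts closed walks (i.e.\ circuits) of length $k$ based at a given vertex.

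\medskip

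For the spectral equality $\sum_{g \in G//G} \chi_\rho(g)^k = \mathrm{tr}(A^k_\Gamma)$, I would apply Proposition \ref{prop:spectra_adj_matrix}: the eigenvalues of $A_\Gamma$ form the multiset $\{\chi_\rho(g) : [g] \in G//G\}$. Since $\mathrm{tr}(A^k_\Gamma)$ equals the sum of $k$-th powers of the eigenvalues (counted with multiplicity), the claim follows.

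\medskip

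For the representation-theoretic equality $\dim \Hom_G(\rho^{\otimes k}, \C[G]^{adj}) = \sum_{g \in G//G} \chi_\rho(g)^k$, the key observation is that the character of the conjugation representation satisfies $\chi_{\C[G]^{adj}}(g) = \abs{C(g)}$ (the basis vectors $e_h$ fixed by conjugation by $g$ are exactly those with $h \in C(g)$), and this character is real-valued. By the standard inner product formula for multiplicities,
\begin{equation*}
\dim \Hom_G(\rho^{\otimes k}, \C[G]^{adj}) = \frac{1}{\abs{G}}\sum_{g \in G} \chi_\rho(g)^k \cdot \abs{C(g)}.
\end{equation*}
Grouping by conjugacy class and using the orbit-stabilizer identity $\abs{[g]} \cdot \abs{C(g)} = \abs{G}$, the sum collapses to $\sum_{[g] \in G//G} \chi_\rho(g)^k$.

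\medskip

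None of the steps should present a serious obstacle; the argument is essentially a bookkeeping exercise combining three well-known facts (walk-counting via powers of the adjacency matrix, the spectrum of a McKay graph from Proposition \ref{prop:spectra_adj_matrix}, and the character of the conjugation representation). If anything, the only point that requires a moment's care is the computation of $\chi_{\C[G]^{adj}}$, but this is a direct fixed-point count.
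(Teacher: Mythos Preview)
Your proof is correct. Two of the three equalities (the spectral one and the walk-counting one) are handled exactly as in the paper. For the remaining equality, however, you take a different route: you compute $\chi_{\C[G]^{adj}}(g)=\abs{C(g)}$ and then use the character inner product together with orbit--stabilizer to obtain $\dim\Hom_G(\rho^{\otimes k},\C[G]^{adj})=\sum_{[g]}\chi_\rho(g)^k$ directly. The paper instead proves $\dim\Hom_G(\rho^{\otimes k},\C[G]^{adj})=\mathrm{tr}(A_\Gamma^k)$ by decomposing $\C[G]^{adj}\cong\bigoplus_{\mu\in Irr(G)}\mu^*\otimes\mu$ and observing that each summand $\dim\Hom_G(\rho^{\otimes k}\otimes\mu,\mu)$ is precisely the $(\mu,\mu)$ diagonal entry of $A_\Gamma^k$. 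Your approach is a clean character computation that avoids invoking the decomposition of the adjoint representation; the paper's approach is more structural and explains \emph{why} the trace of $A_\Gamma^k$ appears, by matching each diagonal entry to a $\Hom$-space. Both are short and standard.
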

\begin{proof}
 The eigenvalues of the matrix $A_{\Gamma}$ are precisely $\chi_\rho(g)$ for $g \in G//G$ (see Proposition \ref{prop:spectra_adj_matrix}), so the sum of eigenvalues of $A^k_{\Gamma}$ is $\sum_{g \in G//G} \chi_\rho(g)^k $, which proves the middle equality. The right equality holds for any graph $\Gamma$ and is obvious.
 
 It remains to prove that $\dim \Hom_G(\rho^{\otimes k}, \C[G]^{adj}) = tr(A^k_{\Gamma})$. Indeed, we have:
 $$\dim \Hom_G(\rho^{\otimes k}, \C[G]^{adj}) = \dim \Hom_G(\rho^{\otimes k}, \bigoplus_{\mu \in Irr(G)} \mu^* \otimes \mu) = \sum_{\mu \in Irr(G)} \dim \Hom_G(\rho^{\otimes k}\otimes \mu, \mu)$$ 
 In the right hand side, each summand $\dim \Hom_G(\rho^{\otimes k}\otimes \mu, \mu)$ is precisely the diagonal entry in $A^k_{\Gamma}$ corresponding to $\mu \in Irr(G)$, so $$\sum_{\mu \in Irr(G)} \dim \Hom_G(\rho^{\otimes k}\otimes \mu, \mu)= tr(A^k_{\Gamma})$$ as required.
\end{proof}

\begin{corollary}\label{cor:trace}
 Assume $\Gamma$ is undirected and loop-less. Then $$\dim \Hom_G(\rho^{\otimes 2}, \C[G]^{adj}) =\sum_{g \in G//G} \chi_\rho(g)^2 = 2\abs{\{\text{undirected edges in } \Gamma\}}.$$
\end{corollary}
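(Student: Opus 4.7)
The plan is to directly specialize Lemma \ref{lem:trace} to the case $k=2$. That lemma, applied with $k=2$, already supplies the first equality $\dim \Hom_G(\rho^{\otimes 2}, \C[G]^{adj}) = \sum_{g \in G//G} \chi_\rho(g)^2$, and moreover identifies this common value with $tr(A_\Gamma^2)$ and with the number of length-$2$ circuits in $\Gamma$. So the only task remaining is to verify, for an undirected loop-less graph $\Gamma$, that the number of length-$2$ circuits equals $2\abs{\{\text{undirected edges in } \Gamma\}}$.

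To do this I unpack the definitions. A circuit of length $2$ is a pair $(e_0, e_1)$ of directed edges with $e_1$ starting where $e_0$ ends and $e_0$ starting where $e_1$ ends. Since $\Gamma$ has no loops, the endpoints of $e_0$ must be two distinct vertices $x \neq y$, so $e_0 \colon x \to y$ and $e_1 \colon y \to x$. Under the convention recalled in the Notation section, an undirected edge between $x$ and $y$ is by definition a pair of opposite-direction directed edges $e \colon x \to y$ and $e' \colon y \to x$. Each such undirected edge thus contributes exactly two length-$2$ circuits, namely $(e, e')$ based at $x$ and $(e', e)$ based at $y$; conversely, every length-$2$ circuit in $\Gamma$ arises in this way from a unique undirected edge. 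This yields the factor of $2$ and the claimed equality.

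No step here presents a genuine obstacle: the substantive content already resides in Lemma \ref{lem:trace}, and the corollary amounts to a bookkeeping observation once one unwinds what a length-$2$ circuit looks like in an undirected loop-less graph.
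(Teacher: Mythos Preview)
Your proof is correct and matches the paper's approach: the paper states Corollary~\ref{cor:trace} without proof, treating it as an immediate specialization of Lemma~\ref{lem:trace} to $k=2$, and your argument spells out precisely this specialization together with the elementary count of length-$2$ circuits in an undirected loop-less graph.
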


\subsection{McKay classification}\label{ssec:mckay_class}
In this paper, the term {\it binary polyhedral group} (denoted by appending $\mathrm{B}$ to the notation of the 
group) stands for a group $G$ which is a double cover
of the polyhedral group of given type. 

\InnaB{The polyhedral groups we consider are finite subgroups of $SO(3, \mathbb{R})$: these the rotational symmetry groups of the regular polyhedra, denoted by $\mathbf{T}$ (tetrahedral group), $\mathbf{O}$ (octahedral group) and $\mathbf{I}$ (icosahedral group), as well as the dihedral groups $\mathrm{Dih}_n$, $n\geq 2$. The latter are embedded naturally into $SO(3, \mathbb{R})$ by considering them as symmetries of ``degenerate" polyhedra: polygons which lie in the plane $XY$ of $\mathbb{R}^3$.

\begin{example}\label{ex:Dih_2}
  The subgroup of $SO(3, \mathbb{R})$ corresponding to $\mathrm{Dih}_2 \cong C_2 \times C_2$ consists of matrices $ \begin{bmatrix}
  \pm 1 &0\\
  0 &\pm 1
  \end{bmatrix}$. These are precisely the rotations around the coordinate axes by multiples of $\pi$ radians.
\end{example}}

\begin{example}
The 
{\it binary dihedral group} (also known as ``dicyclic group'') $\mathrm{B}\mathrm{Dih}_{n}$ ($n\geq 2$) has presentation
$$\langle a, x | a^{2n} = 1, x^2=a^n, x^{-1}ax = a^{-1}\rangle.$$ It has center
$Z =\{1, a^n\}\cong C_2$ and the quotient $\mathrm{B}\mathrm{Dih}_{n}/Z$ is the dihedral group 
$\mathrm{Dih}_{n}$. For $n=2$, we have $\mathrm{Dih}_{2} \cong C_2 \times C_2$ and we obtain an 
isomorphism $\mathrm{B}\mathrm{Dih}_{2} \cong Q_8$ (the quaternion group).
\end{example}

The following theorem is the most celebrated use of McKay graphs (see \cite[Proposition 4]{McKay} and \cite{Steinberg}):
\begin{theorem}[McKay's theorem]\label{thrm:mckay_class}
The following list describes all McKay graphs for pairs $(G, \rho)$ 
where $G\subset SU(2)$ is a finite group and $\rho$ is the respective
$2$-dimensional representation of $G$.

\end{theorem}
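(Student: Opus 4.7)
The plan is two-step: first enumerate all finite subgroups $G \subset SU(2)$, and then identify the McKay graph $\Gamma(G,\rho)$ for each. For the enumeration, I would use the double cover $\pi : SU(2) \twoheadrightarrow SO(3,\mathbb{R})$. Any finite $G \subset SU(2)$ either contains $-I$, in which case $G$ is the full preimage under $\pi$ of a finite subgroup of $SO(3,\mathbb{R})$ (a ``binary'' polyhedral group), or $\pi|_G$ is injective and an elementary lifting argument forces $G$ to be cyclic of odd order. Feeding in the classical enumeration of finite subgroups of $SO(3,\mathbb{R})$ (cyclic, dihedral, tetrahedral, octahedral, icosahedral) gives the list of candidates: the cyclic groups $C_n$, the binary dihedral groups $\mathrm{B}\mathrm{Dih}_n$, and the exceptional binary polyhedral groups $\mathrm{B}\mathbf{T}, \mathrm{B}\mathbf{O}, \mathrm{B}\mathbf{I}$.

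For the second step, the McKay graph of each candidate pair can be computed directly from its character table. For $G = C_n$, the standard representation splits as two conjugate characters, and tensoring them with each character of $C_n$ yields the cycle $\tilde{A}_{n-1}$ (with a double edge in the degenerate case $n = 2$, where $\rho$ restricts to $2\cdot\sgn$). For $G = \mathrm{B}\mathrm{Dih}_n$, the four one-dimensional characters and the $(n-1)$ two-dimensional irreducibles assemble under the Clebsch--Gordan-style rule $\mu \otimes \rho$ into exactly the graph $\tilde{D}_{n+2}$. The three exceptional cases are handled analogously by direct computation, yielding $\tilde{E}_6, \tilde{E}_7, \tilde{E}_8$ respectively.

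The main obstacle in this direct approach is organizing the case analysis for the exceptional groups, whose character tables involve roots of unity and golden-ratio-type irrationals, making the tensor decompositions tedious. I would shortcut this by invoking the spectral machinery already developed in the paper: since $\rho$ is self-dual (as the restriction of a self-dual $SU(2)$-representation) and faithful, Lemma \ref{lem:undirected} and Lemma \ref{lem:connectivity} imply that $\Gamma(G,\rho)$ is connected and undirected, and by Corollary \ref{cor:spectral_radius_adj_matrix} its spectral radius equals $\dim\rho = 2$. The classical theorem of J.\,H.\,Smith classifying connected graphs (allowing multi-edges) with spectral radius exactly $2$ then forces $\Gamma(G,\rho)$ to lie among the affine Dynkin diagrams of types $\tilde{A}, \tilde{D}, \tilde{E}$. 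Matching the Perron--Frobenius eigenvector from Corollary \ref{cor:spectral_radius_adj_matrix}, whose entries are the dimensions $\dim\mu$ for $\mu \in Irr(G)$, with the labellings of the affine diagrams recovers $|G| = \sum(\dim\mu)^2$ and the multiset of irreducible dimensions, and together with the enumeration in step one this pins down each pair $(G,\rho)$ uniquely up to isomorphism.
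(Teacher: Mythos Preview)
Your proposal is correct, and its core matches the paper's own treatment: the paper does not give a full proof of Theorem~\ref{thrm:mckay_class} but states it as a known result, citing \cite{McKay} and \cite{Steinberg}, with the single remark that the proof relies on the classification of connected undirected simply-laced graphs with spectral radius $2$ as affine Dynkin diagrams of types $\tilde A$, $\tilde D$, $\tilde E$. This is precisely the spectral shortcut you invoke in your second paragraph via Lemmas~\ref{lem:undirected}, \ref{lem:connectivity} and Corollary~\ref{cor:spectral_radius_adj_matrix}.

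Your first paragraph, enumerating the finite subgroups of $SU(2)$ through the double cover $SU(2)\to SO(3,\mathbb R)$ and then computing each McKay graph from character tables, is a valid alternative route that the paper does not spell out. It is more hands-on and has the advantage of being self-contained (modulo the classical $SO(3)$ classification), whereas the spectral-radius argument is cleaner but outsources the combinatorial work to Smith's theorem. Either way the two approaches meet at the same endpoint, and the matching of Perron--Frobenius eigenvector entries with $|G|=\sum_\mu(\dim\mu)^2$ that you describe is exactly how one pairs each diagram with its group.
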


\begin{center}
\begin{tabular}{  !{\color{blue}\vrule width 2pt} c|c !{\color{blue}\vrule 
width 2pt} } 
 \specialrule{.2em}{.0em}{.0em}
 \xrowht{20pt}
 Affine Dynkin diagram & Group $G$ \\ 
 \specialrule{.2em}{.0em}{.0em} &
 \\$\tilde A_n$, $n\geq 0$  &cyclic\\ $$
\begin{tikzcd}[arrows=-]
&&\bigstar\arrow[drr]&&\\
  \raisebox{.5pt}{\textcircled{\raisebox{-.9pt} {1}}}\arrow[rru]\arrow[r] & \raisebox{.5pt}{\textcircled{\raisebox{-.9pt} {1}}}\arrow[r]
  &\cdots  \arrow[r] &
  \raisebox{.5pt}{\textcircled{\raisebox{-.9pt} {1}}} \arrow[r]  & \raisebox{.5pt}{\textcircled{\raisebox{-.9pt} {1}}}
\end{tikzcd}
$$
   &$\Z/(n+1)\Z$ \\ &  size: $n+1$\\
 \hline  &\\
 $\tilde D_n$ , $n\geq 4$ & binary dihedral  \\ $$
\begin{tikzcd}[arrows=-]
\bigstar\arrow[rd]&&&&\raisebox{.5pt}{\textcircled{\raisebox{-.9pt} {1}}}\\
   & \raisebox{.5pt}{\textcircled{\raisebox{-.9pt} {2}}}\arrow[r]
  &\cdots  \arrow[r] &
  \raisebox{.5pt}{\textcircled{\raisebox{-.9pt} {2}}} \arrow[ru]\arrow[rd] & \\
\raisebox{.5pt}{\textcircled{\raisebox{-.9pt} {1}}}\arrow[ru]&&&&\raisebox{.5pt}{\textcircled{\raisebox{-.9pt} {1}}}
\end{tikzcd}
$$
& $\mathrm{B}\mathrm{Dih}_{n-2}$ \\ & size: $4(n-2)$ \\ 
 \hline &\\
 $\tilde E_6$ & binary tetrahedral \\
 $$
\begin{tikzcd}[arrows=-]
    &&\bigstar\arrow[d]&&\\
    &&\raisebox{.5pt}{\textcircled{\raisebox{-.9pt} {2}}}\arrow[d]&&\\
  \raisebox{.5pt}{\textcircled{\raisebox{-.9pt} {1}}}\arrow[r] & \raisebox{.5pt}{\textcircled{\raisebox{-.9pt} {2}}}\arrow[r]
  &\raisebox{.5pt}{\textcircled{\raisebox{-.9pt} {3}}}  \arrow[r] &
  \raisebox{.5pt}{\textcircled{\raisebox{-.9pt} {2}}} \arrow[r]  & \raisebox{.5pt}{\textcircled{\raisebox{-.9pt} {1}}}
\end{tikzcd}
$$

  &$\mathrm{B}{\bf T}$ \\ & size: $24$ \\ 
 \hline & \\
 $\tilde E_7$ & binary octahedral\\
 $$
\begin{tikzcd}[arrows=-]
    &&&\raisebox{.5pt}{\textcircled{\raisebox{-.9pt} {2}}}\arrow[d]&&&\\
 \bigstar\arrow[r] &\raisebox{.5pt}{\textcircled{\raisebox{-.9pt} {2}}}\arrow[r] & \raisebox{.5pt}{\textcircled{\raisebox{-.9pt} {3}}}\arrow[r]
  &\raisebox{.5pt}{\textcircled{\raisebox{-.9pt} {4}}}  \arrow[r] &
  \raisebox{.5pt}{\textcircled{\raisebox{-.9pt} {3}}} \arrow[r]  &\raisebox{.5pt}{\textcircled{\raisebox{-.9pt} {2}}}\arrow[r] & \raisebox{.5pt}{\textcircled{\raisebox{-.9pt} {1}}}
\end{tikzcd}
$$

& $\mathrm{B}{\bf O}$ \\ & size: $48$   \\ 
 \hline  &\\
 $\tilde E_8$ & binary icosahedral\\
$$
\begin{tikzcd}[arrows=-]
    &&&&&
  \raisebox{.5pt}{\textcircled{\raisebox{-.9pt} {3}}}\arrow[d]&&\\
  \bigstar\arrow[r]&\raisebox{.5pt}{\textcircled{\raisebox{-.9pt} {2}}}\arrow[r]&\raisebox{.5pt}{\textcircled{\raisebox{-.9pt} {3}}}\arrow[r]&\raisebox{.5pt}{\textcircled{\raisebox{-.9pt} {4}}}\arrow[r] 
&\raisebox{.5pt}{\textcircled{\raisebox{-.9pt} {5}}}\arrow[r] & \raisebox{.5pt}{\textcircled{\raisebox{-.9pt} {6}}}\arrow[r]
  &\raisebox{.5pt}{\textcircled{\raisebox{-.9pt} {4}}}  \arrow[r]  & \raisebox{.5pt}{\textcircled{\raisebox{-.9pt} {2}}}
\end{tikzcd}
$$
& $\mathrm{B}{\bf I}$ \\ & size: $120$ \\ 
 \specialrule{.2em}{.0em}{.0em}
\end{tabular}
\end{center}

The proof of Theorem \ref{thrm:mckay_class} relies on the classification of all finite undirected connected simply-laced graphs $\Gamma$ with spectral radius $2$. The classification of such graphs states that they must be affine Dynkin diagrams of types $A$, $D$ or $E$, see \cite[Proposition 4]{McKay}.

\begin{proposition}\label{prop:McKay_classification}
  Let $G$ be a finite group and $\rho$ be its self-dual faithful 
representation (so $\Gamma(G, \rho)$ is connected and undirected). Assume 
$\dim \rho = 2$. 

Then either 
$G\subset SU(2)$ (hence $G$ appears in the list of Theorem \ref{thrm:mckay_class} and $\Gamma(G, \rho)$ is of the form above), 
or $G=\mathrm{Dih}_n$ and $\rho$ is the 
complexification of the tautological $2$-dimensional representation of 
$\mathrm{Dih}_n$. In the latter case, $\Gamma(G, \rho)$ is drawn below.
\end{proposition}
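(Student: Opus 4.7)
The plan is to exploit the self-duality of $\rho$ to embed $G$ either in the symplectic or in the orthogonal group, matching the two alternatives in the statement. First I would average a positive-definite Hermitian form on $V:=\mathbb{C}^2$ over $G$ to obtain a $G$-invariant inner product $h$, so that $\rho(G)\subset U(V)\cong U(2)$; by faithfulness I identify $G$ with $\rho(G)$. Self-duality $\rho\cong\rho^*$ provides a nondegenerate $G$-invariant bilinear form $b$ on $V$; decomposing $b=b_s+b_a$ into symmetric and antisymmetric parts, both summands are $G$-invariant, and at least one is nonzero. If some invariant antisymmetric $b_a$ is nonzero, then since the space of antisymmetric bilinear forms on a $2$-dimensional space is one-dimensional, $b_a$ is a scalar multiple of the standard symplectic form $\omega$; hence $G\subset\mathrm{Sp}(V,\omega)\cap U(V)=\mathrm{SU}(V)\cong\mathrm{SU}(2)$, giving the first alternative.

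In the remaining case every invariant form is symmetric, and I fix a nondegenerate invariant symmetric $b_s$. I would then construct a $G$-equivariant $\mathbb{C}$-antilinear endomorphism $\sigma:V\to V$ by the relation $h(v,\sigma u)=b_s(v,u)$. Assuming $\rho$ is irreducible (the reducible case is handled by direct inspection of the characters involved), Schur's lemma shows $\sigma^2=c\cdot\mathrm{id}$ for some scalar $c$, and the identity
\[
h(\sigma v,\sigma v)=b_s(\sigma v,v)=b_s(v,\sigma v)=h(v,\sigma^2 v)=\bar c\, h(v,v)
\]
together with positivity of $h$ forces $c$ to be a positive real number. Rescaling $b_s$ I arrange $\sigma^2=\mathrm{id}$, so that $V_\mathbb{R}:=V^\sigma$ is a $G$-invariant real structure on $V$ with $b_s|_{V_\mathbb{R}}$ positive definite (indeed, for $v\in V_\mathbb{R}$ one has $b_s(v,v)=h(v,\sigma v)=h(v,v)$). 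Consequently $G\subset O(V_\mathbb{R},b_s|_{V_\mathbb{R}})\cong O(2,\mathbb{R})$.

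The finite subgroups of $O(2,\mathbb{R})$ are either cyclic (contained in $SO(2)$) or dihedral groups $\mathrm{Dih}_n$. If $G\subset SO(2)$, then $G$ is cyclic and $\rho$ diagonalizes over $\mathbb{C}$ as $\chi\oplus\chi^{-1}$, so $\det\rho=1$ and $G\subset\mathrm{SU}(2)$, still the first alternative. Otherwise $G$ contains a reflection and is isomorphic to a dihedral group $\mathrm{Dih}_n$; by construction $\rho$ is the complexification of the tautological inclusion $\mathrm{Dih}_n\hookrightarrow O(2,\mathbb{R})$, giving the second alternative.

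The main technical obstacle is the construction of the real structure $\sigma$ and the verification of its positivity in the orthogonal case, which is essentially the Frobenius--Schur dichotomy made explicit in this low-dimensional setting. Once the alternative between $\mathrm{SU}(2)$ and $O(2,\mathbb{R})$ is secured, the remainder of the argument is a routine invocation of the classification of finite subgroups of $O(2,\mathbb{R})$ together with Theorem~\ref{thrm:mckay_class} to identify $\Gamma(G,\rho)$ in the $\mathrm{SU}(2)$ case.
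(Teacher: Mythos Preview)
Your proof is correct and follows essentially the same route as the paper: both arguments invoke the Frobenius--Schur dichotomy for a self-dual representation, concluding that either there is an invariant symplectic form (giving $G\subset\mathrm{Sp}(2)\cap U(2)=SU(2)$) or an invariant symmetric form (giving a real form and hence $G\subset O(2,\mathbb{R})$), after which one appeals to the classification of finite subgroups of $O(2,\mathbb{R})$. The only difference is presentational: the paper cites the real/quaternionic type dichotomy as a standard fact and passes directly to the real form, whereas you unpack the construction of the real structure $\sigma$ explicitly via the relation $h(v,\sigma u)=b_s(v,u)$ and verify its positivity by hand.
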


\begin{center}
\begin{tabular}{  !{\color{blue}\vrule width 2pt} c|c|c !{\color{blue}\vrule 
width 2pt} } 
 \specialrule{.2em}{.0em}{.0em}
 \xrowht{20pt}
 parity of $n$ & McKay graph for $\mathrm{Dih}_n$ and tautological $\rho$, $\dim(\rho)=2$ & number of vertices\\ 
 \specialrule{.2em}{.0em}{.0em}
 $n$ even  & $$
\begin{tikzcd}[arrows=-]
\bigstar\arrow[rd]&&&&\raisebox{.5pt}{\textcircled{\raisebox{-.9pt} {1}}}\\
   & \raisebox{.5pt}{\textcircled{\raisebox{-.9pt} {2}}}\arrow[r]
  &\cdots  \arrow[r] &
  \raisebox{.5pt}{\textcircled{\raisebox{-.9pt} {2}}}\arrow[ru]\arrow[rd] & \\
\raisebox{.5pt}{\textcircled{\raisebox{-.9pt} {1}}}\arrow[ru]&&&&\raisebox{.5pt}{\textcircled{\raisebox{-.9pt} {1}}}
\end{tikzcd}
$$ &$\frac{n}{2}+3$
 \\
 \hline
 $n$ odd & $$
\begin{tikzcd}[arrows=-]
\bigstar\arrow[rd]&&&&{}\\
   & \raisebox{.5pt}{\textcircled{\raisebox{-.9pt} {2}}}\arrow[r]
  &\cdots  \arrow[r] &
  \raisebox{.5pt}{\textcircled{\raisebox{-.9pt} {2}}} \arrow[loop right]{r} & \\
\raisebox{.5pt}{\textcircled{\raisebox{-.9pt} {1}}}\arrow[ru]&&&&{}
\end{tikzcd}
$$ &$\frac{n+3}{2}$\\ 
 \specialrule{.2em}{.0em}{.0em}
\end{tabular}
\end{center}

\begin{proof}
Since $\rho$ is 
self dual, it either has a symplectic $G$-invariant form (i.e. $\rho$ is of 
quaternionic type) or a symmetric $G$-invariant form (i.e. $\rho$ is of 
real type). In the former case, $G 
\subset SL(2)\cap U(2) =SU(2)$ and we are in the situation of Theorem 
\ref{thrm:mckay_class}. In the latter case, the representation $(\rho, V)$ is 
of real type, the corresponding $2$-dimensional real representation 
$(\rho_{\mathbb R}, V_{\mathbb R})$ satisfies: $\rho \cong 
\C\otimes_{\mathbb R}\rho_{\mathbb R}$, where $\rho_{\mathbb R}$ is a faithful $2$-dimensional 
representation of $G$ over $\mathbb R$. Hence $G\subset {\rm Dih}_n$ for some $n\geq 1$. But in that case $G$ it either cyclic (and then we are again in the situation of Theorem 
\ref{thrm:mckay_class}) or $G$ is dihedral. This 
completes the proof of the proposition.
\end{proof}

\subsection{Bipartite McKay graphs}
\begin{lemma}\label{lem:bipartite_eigen}
 Let $\Gamma$ be a bipartite graph with adjacency matrix $A_{\Gamma}$. Then for each eigenvalue $\lambda$ of $A_{\Gamma}$, $-\lambda$ is also an eigenvalue of $A_{\Gamma}$.
\end{lemma}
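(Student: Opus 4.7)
The plan is to exploit the bipartite block structure of $A_\Gamma$ together with a diagonal sign-change conjugation that intertwines $A_\Gamma$ with $-A_\Gamma$.

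First I would write down the structure explicitly. Since $\Gamma$ is bipartite, there is a partition $X(\Gamma) = X_1 \sqcup X_2$ such that every edge of $\Gamma$ goes between $X_1$ and $X_2$ (in one direction or the other). Ordering the rows and columns of $A_\Gamma$ so that the vertices of $X_1$ come first, this forces
$$A_\Gamma = \begin{pmatrix} 0 & B \\ C & 0 \end{pmatrix}$$
for some matrices $B$, $C$ (and $C = B^T$ if $\Gamma$ is undirected, but this is not needed).

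Next I would introduce the diagonal sign matrix $D = \mathrm{diag}(I_{X_1}, -I_{X_2})$, which is an involution. A direct block computation gives
$$D A_\Gamma D^{-1} = \begin{pmatrix} I & 0 \\ 0 & -I \end{pmatrix}\begin{pmatrix} 0 & B \\ C & 0 \end{pmatrix}\begin{pmatrix} I & 0 \\ 0 & -I \end{pmatrix} = \begin{pmatrix} 0 & -B \\ -C & 0 \end{pmatrix} = -A_\Gamma.$$
Hence $A_\Gamma$ and $-A_\Gamma$ are similar matrices, so they have the same spectrum (with multiplicities). Since $\mathrm{Spec}(-A_\Gamma) = -\mathrm{Spec}(A_\Gamma)$, this shows that $\lambda$ is an eigenvalue of $A_\Gamma$ if and only if $-\lambda$ is, as required.

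There is essentially no obstacle here; this is a purely linear-algebraic fact about matrices with the off-diagonal block form above. The only small care needed is to keep track of multiplicities, which is automatic since we show $A_\Gamma$ is conjugate (not merely cospectral via some weaker argument) to $-A_\Gamma$. If one prefers a more hands-on argument, one can instead check directly that whenever $(x,y)^T$ (with $x$ supported on $X_1$ and $y$ on $X_2$) is an eigenvector of $A_\Gamma$ with eigenvalue $\lambda$, the vector $(x,-y)^T$ is an eigenvector with eigenvalue $-\lambda$.
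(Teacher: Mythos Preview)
Your proof is correct and takes essentially the same approach as the paper: both flip the sign of the eigenvector on one side of the bipartition, which is exactly your conjugation by $D$ (and your ``hands-on'' alternative at the end is verbatim the paper's argument). The only cosmetic difference is that you package this as the similarity $D A_\Gamma D^{-1} = -A_\Gamma$, which has the minor bonus of making preservation of multiplicities immediate.
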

\begin{proof}
 Let $X(\Gamma) = X_0 \sqcup X_1$ be the bipartition of the set of vertices, such that $$\forall i\in \{1,2\}, \; \forall x, y \in X_i, \; x\notin \mathtt{N}(y).$$ Let $f: X(\Gamma) \to \C$ be the eigenvector of $A_{\Gamma}$ with eigenvalue $\lambda$. Then $$\forall x\in X(\Gamma), \; \lambda f(x) = \sum_{y \in \mathtt{N}(x)} f(y).$$ Now define a new function $\tilde{f}: X(\Gamma) \to \C$ by setting $$\forall i\in \{1,2\}, \; \forall x\in X_i, \; \tilde{f}(x) := (-1)^i f(x).$$ Then for any $x\in X_i$, we have: 
 $$-\lambda \tilde{f}(x) = -\lambda (-1)^i f(x) = \sum_{y \in \mathtt{N}(x)}(-1)^{i+1} f(y) =  \sum_{y \in \mathtt{N}(x)} \tilde{f}(y),$$ making $\tilde{f}$ an eigenvector of $A_{\Gamma}$ with eigenvalue $-\lambda$.
\end{proof}

Let $\Gamma := \Gamma(G, \rho)$ be the McKay graph for a finite group $G$ and a representation $\rho$ of $G$.
\begin{lemma}\label{lem:bipartite_center}

 Assume $\rho$ is irreducible and faithful. Then the graph $\Gamma= \Gamma(G, \rho)$ is bipartite iff $C_2 \subset Z(G)$ and this subgroup acts non-trivially on $\rho$. 
\end{lemma}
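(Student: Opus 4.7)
The plan is to handle the two directions separately, in each case using the spectral description of the McKay graph (Proposition~\ref{prop:spectra_adj_matrix}) together with the central character interpretation of any element $g \in Z(G)$.

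For the direction ``$\Gamma$ bipartite $\Rightarrow$ $C_2 \subset Z(G)$ acting nontrivially on $\rho$,'' I will first use Lemma~\ref{lem:connectivity} (since $\rho$ is faithful) to conclude that $\Gamma$ is strongly connected, so by Perron--Frobenius applied to $A_\Gamma$, the spectral radius $\dim \rho$ (Corollary~\ref{cor:spectral_radius_adj_matrix}) has multiplicity $1$. Then Lemma~\ref{lem:bipartite_eigen} gives that $-\dim\rho$ is also an eigenvalue of $A_\Gamma$. By Proposition~\ref{prop:spectra_adj_matrix}, there exists $g \in G$ with $\chi_\rho(g) = -\dim\rho$. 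Since $\chi_\rho(g)$ is a sum of $\dim\rho$ roots of unity whose sum has absolute value $\dim\rho$, all these roots of unity must be equal; combined with the value $-\dim\rho$, this forces $\rho(g) = -\operatorname{Id}$. Because $\rho(g)$ is scalar it commutes with $\rho(G)$, and faithfulness of $\rho$ then gives $g \in Z(G)$ and $g^2 = 1_G$, $g \neq 1_G$, so $\langle g\rangle \cong C_2 \subset Z(G)$ acts on $\rho$ by $-1$.

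For the converse, assume $g \in Z(G)$ has order $2$ and acts on $\rho$ by $-\operatorname{Id}$. Since $g$ is central, Schur's lemma implies that for any $\mu \in \mathrm{Irr}(G)$, $\mu(g)$ is a scalar $\varepsilon(\mu) \in \{\pm 1\}$. I will use this to define a bipartition $\mathrm{Irr}(G) = X_+ \sqcup X_-$ according to the sign $\varepsilon(\mu)$; both parts are nonempty since $\triv \in X_+$ and $\rho \in X_-$. For any edge $\mu \to \lambda$ in $\Gamma$, the irreducible $\lambda$ appears in $\mu \otimes \rho$, on which $g$ acts by $\varepsilon(\mu)\cdot(-1)$; hence $\varepsilon(\lambda) = -\varepsilon(\mu)$. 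Thus every edge of $\Gamma$ connects $X_+$ with $X_-$, proving bipartiteness.

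The only subtle point is the step identifying $\rho(g) = -\operatorname{Id}$ from $\chi_\rho(g) = -\dim\rho$; this is a standard fact about characters of finite groups (the trace of a unitarizable representation achieves the bound $|\chi(g)| \leq \dim \rho$ exactly on scalar matrices), and once that is in hand, both directions follow quickly. No further case analysis or heavy computation is required.
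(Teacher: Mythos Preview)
Your proof is correct and follows essentially the same route as the paper's: the backward direction partitions $\mathrm{Irr}(G)$ by the central character of the order-$2$ element, and the forward direction uses Lemma~\ref{lem:bipartite_eigen} and Proposition~\ref{prop:spectra_adj_matrix} to find $g$ with $\chi_\rho(g)=-\dim\rho$, then deduces $\rho(g)=-\id$ from the roots-of-unity bound. The invocation of Perron--Frobenius to get multiplicity~$1$ for $\dim\rho$ is harmless but unnecessary---once you know $\dim\rho$ is an eigenvalue (Corollary~\ref{cor:spectral_radius_adj_matrix}), Lemma~\ref{lem:bipartite_eigen} already gives you $-\dim\rho$ as an eigenvalue without any multiplicity information.
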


\begin{proof}

$[\mathbf{\Leftarrow}]$: Assume $C_2 \subset Z(G)$, and let $z\in C_2\subset Z(G)$, $z\neq 1_G $. Then the vertices in $\Gamma$ can be partitioned into two disjoint subsets, according to the eigenvalue by which $z$ acts on the given irreducible representation of $G$. Since $\rho$ is faithful and irreducible, $\rho(z) =-\id$. So clearly, there are no edges in $\Gamma$ between vertices $\lambda, \mu \in Irr(G)$ such that $\lambda(z) = \mu(z)$. Hence $\Gamma$ is bipartite.

 $[\mathbf{\Rightarrow}]$: Assume $\Gamma$ is bipartite. By Lemma \ref{lem:bipartite_eigen}, for each eigenvalue $\lambda$ of $A_{\Gamma}$, we also have an eigenvalue $-\lambda$.

 Next, the eigenvalues of $A_{\Gamma}$ are precisely $\chi_\rho(g)$ by Proposition \ref{prop:spectra_adj_matrix}, so there exists $z \in G$ with 
 $$\chi_\rho(z) = -\chi_{\rho}(1_G) = - \dim \rho$$
 Since $z \in G$ has finite order, $\rho(z)$ is a diagonalizable operator whose eigenvalues are all roots of unity. We have $tr(\rho(z)) = - \dim \rho$ and hence $\rho(z) = -\id$. Faithfulness of $\rho$ now implies that $z \in Z(G)$ and $z^2 = 1_G$. Hence $C_2 \subset Z(G)$ as required.
 
\end{proof}
\begin{lemma}\label{lem:faithful_center}
 Assume $\rho$ is irreducible and faithful, and that $\rho^* \cong \rho$. Then $Z(G) \subset C_2 $.
\end{lemma}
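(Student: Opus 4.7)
The plan is to use Schur's lemma together with self-duality of $\rho$ to pin down the scalars by which central elements of $G$ act, and then invoke faithfulness.

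First I would pick an arbitrary $z\in Z(G)$. Since $\rho$ is irreducible, $\rho(z)$ commutes with every $\rho(g)$, so by Schur's lemma $\rho(z)=\lambda(z)\cdot\mathrm{Id}$ for some scalar $\lambda(z)\in\C^\times$. Because $z$ has finite order, $\lambda(z)$ is a root of unity, and the assignment $z\mapsto\lambda(z)$ defines a character $\lambda:Z(G)\to\C^\times$.

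Next I would use self-duality. The representation $\rho^*$ sends $z$ to the scalar $\lambda(z)^{-1}$. On the other hand, by hypothesis $\rho^*\cong\rho$ as $G$-representations, so the scalars by which $Z(G)$ acts must agree:
\begin{equation*}
\lambda(z)^{-1}=\lambda(z)\qquad\text{for all }z\in Z(G),
\end{equation*}
which forces $\lambda(z)^2=1$, i.e., $\lambda(z)\in\{\pm1\}$. Thus $\lambda$ takes values in $C_2$.

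Finally I would use faithfulness. If $\lambda(z)=1$ then $\rho(z)=\mathrm{Id}$, and by faithfulness $z=1_G$. Hence $\lambda:Z(G)\to\{\pm1\}$ is an injective group homomorphism, so $|Z(G)|\le 2$, giving $Z(G)\subset C_2$ as claimed. No step here is an obstacle; the argument is a direct three-line application of Schur's lemma, self-duality, and faithfulness.
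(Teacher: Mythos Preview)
Your proof is correct and follows essentially the same approach as the paper: both use Schur's lemma to define the central character $\lambda$, self-duality to force $\lambda(z)^2=1$, and faithfulness to conclude that $\lambda$ is injective. The only cosmetic difference is that the paper phrases the middle step via $\triv\subset\rho\otimes\rho^*\cong\rho^{\otimes 2}$ rather than directly comparing the central scalars of $\rho$ and $\rho^*$.
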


\begin{proof}
  By Schur's lemma, $\rho$ defines a character of the center $$\lambda:Z(G) \to \C^{\times}, \;\, z\mapsto \lambda(z),$$ where $\rho(z) = \lambda(z)\id$. Since $\rho^* \cong \rho$, we have: $\triv \subset \rho \otimes \rho^* \cong \rho^{\otimes 2}$ and on this space $z \in Z(G)$ acts by $\lambda^2(z)\id$. So $\lambda(z)^2 = 1$ and thus $\lambda(z)=\pm 1$. Now, since $\rho$ is faithful, the map $\lambda: Z(G) \to \C$ is injective, hence $Z(G) \subset C_2 $.
\end{proof}

Recall that by Lemma \ref{lem:undirected}, $\rho^* \cong \rho$ iff $\Gamma$ is undirected, and by Lemma \ref{lem:connectivity}, $\rho$ is faithful iff $\Gamma$ is connected. Using Lemmas \ref{lem:bipartite_center}, \ref{lem:faithful_center}, we conclude:

\begin{corollary}\label{cor:bipartite_center_Z_2}
 Assume $\rho$ is irreducible and $\Gamma = \Gamma(G, \rho)$ is an undirected and connected graph. Then $\Gamma$ is bipartite iff $Z(G) = C_2 $.
\end{corollary}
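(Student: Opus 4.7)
The plan is to combine the two preceding lemmas with the translations provided by Lemmas \ref{lem:undirected} and \ref{lem:connectivity}. Since $\Gamma$ is undirected and connected, these two lemmas tell us that $\rho$ is self-dual ($\rho^*\cong\rho$) and faithful. Together with irreducibility, this puts us in the setting of both Lemma \ref{lem:bipartite_center} and Lemma \ref{lem:faithful_center}, which is essentially all we need.

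For the forward direction, suppose $\Gamma$ is bipartite. Lemma \ref{lem:bipartite_center} gives us $C_2 \subset Z(G)$. On the other hand, Lemma \ref{lem:faithful_center}, applied using the self-duality of $\rho$, gives the reverse inclusion $Z(G) \subset C_2$. Combining these two inclusions yields $Z(G) = C_2$.

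For the reverse direction, suppose $Z(G) = C_2$, and let $z$ be the nontrivial element of $Z(G)$. Since $\rho$ is faithful, $\rho(z) \neq \id$; and since $z$ has order $2$, by Schur's lemma $\rho(z)$ acts on $\rho$ as $\lambda\cdot\id$ with $\lambda^2 = 1$, forcing $\rho(z) = -\id$. Thus the subgroup $C_2 = Z(G)$ acts non-trivially on $\rho$, and Lemma \ref{lem:bipartite_center} immediately yields that $\Gamma$ is bipartite.

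There is essentially no obstacle here: the corollary is just the straightforward assembly of Lemmas \ref{lem:bipartite_center} and \ref{lem:faithful_center} once the hypotheses on $\Gamma$ have been translated, via Lemmas \ref{lem:undirected} and \ref{lem:connectivity}, into the representation-theoretic hypotheses that $\rho$ is faithful and self-dual.
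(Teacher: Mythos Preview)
Your proof is correct and follows essentially the same approach as the paper: the paper simply states that the corollary follows by combining Lemmas \ref{lem:undirected}, \ref{lem:connectivity}, \ref{lem:bipartite_center}, and \ref{lem:faithful_center}, and your argument spells out precisely this combination with the small additional verification (via faithfulness and Schur's lemma) that the copy of $C_2$ acts non-trivially on $\rho$.
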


\section{Trees}\label{sec:trees}
\begin{theorem}\label{thrm:main}
 Let $\Gamma := \Gamma(G, \rho)$ be the McKay graph for a finite group $G$ and a representation $\rho$ of $G$. 
 
 Assume $\Gamma$ is an (undirected) tree. 
 
 Then $\rho$ is a faithful irreducible representation of $G$, and one of the following conditions is satisfied:
 \begin{itemize}
 \item $\dim \rho =2$, and the graph $\Gamma$ is an affine Dynkin diagram as in Proposition \ref{prop:McKay_classification}; in that case $(G, \rho)$ belongs to the list appearing in Theorem \ref{thrm:mckay_class}, or $G=Dih_{2n}$ and $\rho$ is its natural $2$-dimensional representation.
  \item $G$ is an extra special $2$-group of order  $2^{1+2n}$ ($n\geq 0$), with $\rho$ its unique irreducible $2^n$-dimensional representation on which the center of $G$ acts non-trivially; in that case $\Gamma$ is a ``hedgehog'' as in Example \ref{ex:extra_special}, whose number of spines (leaves) is $4^n$. 
 \end{itemize}

\end{theorem}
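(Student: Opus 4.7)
The plan is to combine the trace identity from Corollary \ref{cor:trace} with the defining numerical property of a tree, namely $N-1$ edges on $N = |Irr(G)|$ vertices. Since $\Gamma$ is undirected and connected, Lemmas \ref{lem:undirected} and \ref{lem:connectivity} immediately yield that $\rho$ is self-dual and faithful. Using the decomposition $\C[G]^{adj} \cong \bigoplus_{[x] \in G//G} \triv \uparrow^{G}_{C(x)}$ together with Frobenius reciprocity and $\rho \cong \rho^{*}$, Corollary \ref{cor:trace} rewrites as
$$
2(N-1) \;=\; \sum_{[x] \in G//G} \chi_{\rho}(x)^{2} \;=\; \sum_{[x] \in G//G} \dim \End_{C(x)}(\rho \downarrow_{C(x)}),
$$
a sum of $N$ positive integers.

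If $\rho$ were reducible, then in each $\End_{C(x)}(\rho \downarrow_{C(x)})$ the block-diagonal idempotents coming from a nontrivial decomposition $\rho = \rho_{1} \oplus \rho_{2}$ would span a two-dimensional subalgebra, forcing every summand to be $\geq 2$ and the total to be $\geq 2N > 2(N-1)$, a contradiction. Hence $\rho$ is irreducible. Since a tree is bipartite, Corollary \ref{cor:bipartite_center_Z_2} gives $Z(G) = C_{2}$; the two central classes contribute $1$ apiece, leaving $2(N-2)$ distributed among the $N-2$ non-central classes. For every non-central $x$, Schur's lemma combined with faithfulness makes $\rho(x)$ a non-scalar element of $\End_{C(x)}(\rho \downarrow_{C(x)})$ (as $x \in Z(C(x))$), so each non-central summand is $\geq 2$; the sum then forces equality, giving $\rho \downarrow_{C(x)} \cong \sigma_{1} \oplus \sigma_{2}$ with $\sigma_{1}, \sigma_{2}$ two distinct irreducibles of $C(x)$.

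If some non-central $x$ has an abelian centralizer, both $\sigma_{i}$ are one-dimensional, so $\dim \rho = 2$ and Proposition \ref{prop:McKay_classification} applies; selecting from its list only the graphs which happen to be trees (the $\tilde D_{n}$ for $n \geq 4$ and $\tilde E_{6,7,8}$ from Theorem \ref{thrm:mckay_class}, together with the $\mathrm{Dih}_{n}$ graph for even $n$, while $\tilde A_{n}$ is a cycle and the $\mathrm{Dih}_{n}$ graph for odd $n$ carries a loop) yields precisely cases (1) and (2) of the theorem.

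In the remaining case every non-central centralizer is non-abelian, and the goal is to identify $G$ as an extra-special $2$-group of order $2^{1+2n}$. The plan has two parts. First, show that $\rho(x)^{2}$ is a scalar for every $x \in G$, so that $x^{2} \in Z(G)$ and $G/Z(G)$ is an elementary abelian $2$-group $\cong \F_{2}^{m}$. Second, combine $\sum_{\mu}(\dim \mu)^{2} = |G| = 2^{m+1}$ with the fact that every irreducible $\mu \neq \rho$ turns out to be a one-dimensional character of $G/[G,G]$, to deduce $(\dim \rho)^{2} = 2^{m}$, forcing $m = 2n$ even, $\dim \rho = 2^{n}$, and $G$ extra-special; the graph $\Gamma$ then automatically takes the hedgehog shape of Example \ref{ex:extra_special}. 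The main obstacle will be the first part. Writing $\rho(x) = \zeta_{1}\pi_{1} + \zeta_{2}\pi_{2}$ with $\zeta_{1} \neq \zeta_{2}$ roots of unity, the sub-case in which both $\sigma_{i}$ are self-dual immediately forces $\zeta_{i} \in \{\pm 1\}$ and hence $\zeta_{1}^{2} = \zeta_{2}^{2} = 1$; the sub-case $\sigma_{2} \cong \sigma_{1}^{*}$, in contrast, requires exploiting global information, in particular that the nontrivial element of $Z(G) = C_{2}$ acts as $-\id$ on $\rho$ and hence on both $\sigma_{i}$, to conclude $\zeta_{1}^{2} = \zeta_{2}^{2}$.
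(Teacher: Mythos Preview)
Your argument tracks the paper's proof closely through the trace identity, the irreducibility of $\rho$, the identification $Z(G)=C_2$ via bipartiteness, and the conclusion that $\dim\End_{C(x)}(\rho\downarrow_{C(x)})=2$ for every non-central $x$. The split into ``some centralizer abelian'' versus ``all centralizers non-abelian'' is also the same.

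The genuine gap is in your handling of the second sub-case, where $\sigma_2\cong\sigma_1^*$ is not self-dual. You propose to use that the central involution $z$ acts by $-\id$ on $\rho$ and hence on both $\sigma_i$, and to deduce $\zeta_1^2=\zeta_2^2$. But this deduction does not go through: the fact that $z$ acts by $-1$ on $\sigma_1$ and by $(-1)^{-1}=-1$ on $\sigma_1^*$ is automatic and carries no information whatsoever about the eigenvalues $\zeta_1,\zeta_2$ of the \emph{other} element $x$. Since $\sigma_2\cong\sigma_1^*$ only gives $\zeta_2=\bar\zeta_1$, the desired conclusion $\zeta_1^2=\zeta_2^2$ is equivalent to $\zeta_1^4=1$, and nothing in your outline forces this.

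The paper closes this gap by a different idea that makes essential use of the hypothesis that $C(x)$ is non-abelian. One observes that the set of $h\in C(x)$ for which $\rho(h)$ acts as a scalar on each eigenspace $V_{\zeta_k}$ of $\rho(x)$ is an \emph{abelian} subgroup of $C(x)$; hence there exists $h\in C(x)$ not of this form. Since $\rho(x)$ and $\rho(h)$ commute and (by self-duality) their eigenspaces have equal dimensions, all four joint eigenspaces are nonzero, so $\rho(xh)$ has the four eigenvalues $\zeta_k t_j$. But $xh$ is again non-central (else $\rho(xh)$ would be scalar, impossible since $\zeta_1 t_1\neq\zeta_1 t_2$), so $\rho(xh)$ has at most two eigenvalues; this forces both coincidences $\zeta_1 t_1=\zeta_2 t_2$ and $\zeta_1 t_2=\zeta_2 t_1$, whence $\zeta_1=-\zeta_2=-\bar\zeta_1$, i.e.\ $\zeta_1\in\{\pm i\}$. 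This is the missing mechanism, and without it (or an equivalent substitute) your argument does not reach the conclusion $\rho(x)^2=\pm\id$.
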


\begin{proof}
First of all, since $\Gamma$ is undirected and connected, we have, by Lemmas \ref{lem:connectivity}, \ref{lem:undirected}, that $\rho$ is a self-dual, faithful representation of $G$. Next, the condition that $\Gamma$ is a tree implies that $$\abs{\{\text{undirected edges in } \Gamma\}} = 2(\abs{Irr(G)}-1) = 2(\abs{G//G} -1).$$

By Corollary \ref{cor:trace} we have:
 \begin{equation}\label{eq:sq_char}
\dim \Hom_G(\rho^{\otimes 2}, \C[G]^{adj}) = tr(A_{\Gamma}^{\otimes 2})= 2(\abs{G//G} -1).  
 \end{equation}

 Now, 
 $$\C[G]^{adj} = \bigoplus_{[g] \in G//G} span\{g |g\in [g]\} \cong \bigoplus_{[g] \in G//G} \C[G/C(g)] \cong \bigoplus_{[g] \in G//G} \C\uparrow^G_{C(g)}, $$
 where $C(g)$ is the centralizer of an element $g$ in the conjugacy class $[g]$ (the isomorphisms depend on the choices of representatives $g\in [g]$, of course).

 Thus we have:
\begin{align}\label{eq:sum_dim_end}
 \nonumber&2(\abs{G//G} -1) = \dim \Hom_G(\rho^{\otimes 2}, \C[G]^{adj})  = \sum_{[g] \in G//G} \dim \Hom_G(\rho^{\otimes 2}, \C\uparrow^G_{C(g)} )=  \\ &\sum_{[g] \in G//G} \dim \Hom_{C(g)}\left(\rho^{\otimes 2} \downarrow_{C(g)}, \triv\right)= \sum_{[g] \in G//G}  \dim \End_{C(g)}(\rho \downarrow_{C(g)})
\end{align}
(the last equality follows from the fact that $\rho$ is self-dual).

Let us compute $ \dim \End_{C(g)}(\rho \downarrow_{C(g)})$. This value is clearly at least $1$, and we will show that this is precisely $2$ when $g\notin Z(G)$.

Assume $\rho$ is reducible as a $G$-representation. Hence it is reducible as a $C(g)$-representation for each $g\in G$. This implies: $$\dim \End_{C(g)}(\rho \downarrow_{C(g)})\geq 2$$ for all $g\in G$ and so $$\sum_{[g] \in G//G}  \dim \End_{C(g)}(\rho \downarrow_{C(g)}) \geq 2\abs{G//G} >2(\abs{G//G} -1)$$ which is a contradiction. Hence $\rho$ is irreducible.  

For $g\in Z(G)$ we have: $C(g)=G$ so clearly $ \dim \End_{C(g)}(\rho \downarrow_{C(g)})=1$ in that case.

Now let $g\in G \setminus Z(G)$. The operator $\rho(g)$ is an intertwining operator on the $C(g)$-representation $\rho \downarrow_{C(g)}$ and hence acts by scalar on any simple $C(g)$-summand of $\rho \downarrow_{C(g)}$. Note that $\rho(g)$ cannot be a scalar endomorphism: indeed, if that were the case, $\rho(g)$ would commute with $\rho(h)$ for any $g\in G$. Since $\rho$ is faithful, this would imply that $g \in Z(G)$, contrary to our assumption on $g$.

Hence $\rho(g)$ is not a scalar endomorphism, and so $\rho \downarrow_{C(g)}$ is not simple.
 
Hence we have: for any $g\in G \setminus Z(G)$, $\dim \End_{C(g)}(\rho \downarrow_{C(g)}) \geq 2$.

 Next, by Corollary \ref{cor:bipartite_center_Z_2} we have: $Z(G) = C_2$. So by Equation \eqref{eq:sum_dim_end}, we have:
 $$\sum_{\substack{[g] \in G//G,\; g \notin Z(G)}} \dim \End_{C(g)}(\rho \downarrow_{C(g)}) = 2(\abs{G//G} -2).$$
 
 Notice that the left hand side has $(\abs{G//G} -2)$ summands, so the average value of $\dim \End_{C(g)}(\rho \downarrow_{C(g)}) $ is $2$. But since we showed that the minimal value is also $2$, we conclude that $\dim \End_{C(g)}(\rho \downarrow_{C(g)}) =2$ for any $g\notin Z(G)$, i.e. the representation $\rho \downarrow_{C(g)}$ has length $2$.
 
 If $C(g)$ is abelian for some $g\notin Z(G)$, then $\rho \downarrow_{C(g)}$ is a direct sum of two $1$-dimensional representations and the statement of the theorem is proved. If $G$ itself is abelian, then $\dim \rho = 1$ and $\Gamma$ is tree only if $G= C_2$, $\rho = \sgn$; again, the statement of the theorem is then proved. 
 
Thus we will assume from now on that $C(g)$ is not abelian for any $g\in G$.
 
Since $\dim \End_{C(g)}(\rho \downarrow_{C(g)}) =2$, for any $g\notin Z(G)$, $\rho(g)$ is a diagonalizable endomorphism with precisely $2$ distinct eigenvalues. Furthermore, since $\rho \cong \rho^*$ as $G$-representations, we have: for each eigenvalue $z$ of $\rho(g)$, its complex conjugate $\bar{z}$ is also an eigenvalue of $\rho(g)$ with the same multiplicity. 

So the set of eigenvalues of each $\rho(g), g\notin Z(G)$ is either $\{\pm 1\}$ or $\{z, \bar{z}\}$ where $z$ is a root of unity of order $\abs{G}$, $z\neq \pm 1$. Note that the second option can occur only if $\dim \rho$ is even.

Assume the eigenvalues of any $\rho(g), g\in G$ belong to $\{\pm 1\}$. This implies that $G$ is a $2$-torsion group, meaning that each element in $G$ has order $2$. In particular, $$\forall g, h\in G, \, ghg^{-1}h^{-1} = ghgh=1_G \; \Rightarrow \; gh=hg$$ so $G$ is abelian. But that contradicts our assumption. 


So there exists $g \in G$ such that $g^2 \neq 1_G$, i.e. the eigenvalues of $\rho(g)$ are $z_1 \neq z_2$ (these are roots of unity, and $z_1 =\bar{z_2}$). Let $V_{z_k}$ denote the eigenspace of $\rho(g)$ corresponding to the eigenvalue $z_k$, $k=1,2$; then $V_{z_1} \oplus V_{z_2}$ is the entire space underlying the representation $\rho$, and $\dim V_{z_1} = \dim V_{z_2}$. 

Let $h\in C(g)$. The operators $\rho(h), \rho(g)$ commute so they are simultaneously diagonalizable ($\rho(h)$ also having at most $2$ distinct eigenvalues). We will say that $h$ is {\it aligned} with $g$ if the restriction of $\rho(h)$ to each of the subspaces $V_{z_k}, k=1,2$ is a scalar endomorphism. The set of elements $h\in C(g)$ aligned with $g$ clearly forms an abelian subgroup.

Since we assumed that the group $C(g)$ is itself not abelian, we can find $h \in C(g)$ which is not aligned with $g$. We have: $h\notin Z(G)$, so we denote by $t_1 \neq t_2$ the distinct eigenvalues of $\rho(h)$. Since $h$ is not aligned with $g$, the eigenvalues of $\rho(gh)$ would be $\{z_k t_j |k, j\in \{1,2\}\}$ or at least $3$ of these products. This implies that in at least one of the pairs $(t_1 z_1, t_2 z_2)$, $(t_1 z_2, t_2 z_1)$ the elements coincide, implying that $z_1^2 =\pm 1$ so $z_1, z_2 \in \{\pm i\}$. So we see that for every $g\in G$, the eigenvalues of $\rho(g)$ are either $\{\pm i\}$ or a subset of $\{\pm 1\}$.

In particular, $\rho(g)^2 = \pm \id$ for each $g\in G$, so $G/Z(G)$ is a $2$-torsion group and hence of the form $\mathbb{F}_2^m$ (an elementary abelian $2$-group). Since $Z(G)=C_2$, we conclude that $G$ is an extra special $2$-group, $m$ is even and $\rho$ is the unique irreducible representation of $G$ of dimension greater than $1$ (see Example \ref{ex:extra_special}). This concludes the proof of the theorem.

\end{proof}

\section{Forests}\label{sec:forests}
In this section we investigate the case of an undirected forest graph, i.e. a disjoint union of undirected trees.

\subsection{Classification of McKay forests}

\begin{proposition}\label{prop:forest}
 Let $\Gamma := \Gamma(G, \rho)$ be the McKay graph for a finite group $G$ and an representation $\rho$ of $G$. 
 
 Assume $\Gamma$ is an (undirected) forest. Then $\rho$ is a irreducible, self-dual representation.
\end{proposition}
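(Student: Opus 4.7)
The plan is to split the two claims. For self-duality, note that the McKay graph is given to be undirected, so Lemma \ref{lem:undirected} immediately yields $\rho \cong \rho^*$; no further work is needed.

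The real content is irreducibility, and the plan is to combine the forest identity from graph theory with the centralizer identity already derived in the proof of Theorem \ref{thrm:main}. Concretely, a forest on $N$ vertices with $c$ connected components has exactly $N-c$ undirected edges. Applied to $\Gamma$, whose vertex set $Irr(G)$ has size $|G//G|$, this yields
\[
\abs{\{\text{undirected edges in }\Gamma\}} \;=\; |G//G| - c,
\]
where $c \geq 1$ is the number of connected components of $\Gamma$. Invoking Corollary \ref{cor:trace} together with the chain of identities \eqref{eq:sum_dim_end} (which was purely formal and used only that $\rho$ is self-dual, not that $\Gamma$ is a tree), this translates into
\[
\sum_{[g]\in G//G}\dim\End_{C(g)}(\rho\!\downarrow_{C(g)}) \;=\; 2(|G//G|-c) \;\leq\; 2|G//G|-2.
\]

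To conclude, I would argue by contradiction: suppose $\rho$ is reducible as a $G$-representation. Then $\rho$ decomposes nontrivially over $G$, and in particular over every subgroup $C(g)$; hence each summand on the left satisfies $\dim\End_{C(g)}(\rho\!\downarrow_{C(g)}) \geq 2$. Summing over the $|G//G|$ conjugacy classes gives a total of at least $2|G//G|$, contradicting the strict inequality above. Therefore $\rho$ is irreducible.

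I don't anticipate any real obstacle here — the argument is essentially a one-line counting check, provided one is willing to reuse verbatim the decomposition of $\dim\Hom_G(\rho^{\otimes 2},\C[G]^{adj})$ as a sum of $\dim\End_{C(g)}(\rho\!\downarrow_{C(g)})$ from the proof of Theorem \ref{thrm:main}. The only mild subtlety is remembering to count $c\geq 1$ rather than $c=1$, so that the inequality is strict and the contradiction goes through for any forest (not merely a tree).
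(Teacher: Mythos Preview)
Your argument is correct. Both self-duality (via Lemma \ref{lem:undirected}) and irreducibility (via the counting inequality) go through exactly as you describe; the ``mild subtlety'' you flag about $c\geq 1$ is indeed the only adjustment needed.

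The paper, however, takes a different and shorter route: rather than re-running the centralizer count in the forest setting, it simply observes that the principal component $\Gamma_{\triv}$ is itself a McKay graph, namely $\Gamma(G/N,\rho)$ with $N=\Ker(\rho)$, and is a tree. Theorem \ref{thrm:main} then applies to the pair $(G/N,\rho)$, giving that $\rho$ is irreducible and self-dual as a $G/N$-representation, hence also as a $G$-representation. What the paper's approach buys is brevity and a clean logical reduction (forest $\Rightarrow$ principal component is a tree $\Rightarrow$ invoke the tree theorem). What your approach buys is directness: it shows that the irreducibility argument from the proof of Theorem \ref{thrm:main} is not really about trees at all but about any forest, so no detour through the quotient $G/N$ is needed. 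In fact your version is closer in spirit to the sketch in the introduction (cf.\ \eqref{eq:dim.formula}, \eqref{eq:dim2}) than the paper's own proof of Proposition \ref{prop:forest}.
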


\begin{proof}
Let $N \InnaB{:=} Ker(\rho) \vartriangleleft G$ and let $K$ be the number of connected components of $\Gamma$. By Lemma \ref{lem:connected_comp_Browne}, we have: $K$ is the number of $G$-conjugacy classes in $N$. 

The principal component $\Gamma_{\triv} = \Gamma(G/N, \rho)$ is a tree, so Theorem \ref{thrm:main} shows that $\rho$ is a self-dual irreducible representation of $G/N$. Hence $\rho$ possesses the same properties when considered as a $G$-representation, which proves the first part of the statement. 
\end{proof}

\begin{theorem}\label{thrm:forest}
  Let $\Gamma := \Gamma(G, \rho)$ be the McKay graph for a finite group $G$ and a representation $\rho$ of $G$. Let $N:=Ker(\rho)$.
 
 Assume $\Gamma$ is an undirected forest. Then we have:
 \begin{enumerate}
  \item All the connected components of $\Gamma$ are of one of the forms described in Theorem \ref{thrm:main}.
  \item If one of the components is a hedgehog with $4^n$ spines for $\InnaB{n\neq 1}$, then all the components of $\Gamma$ are of isomorphic to each other (\InnaB{in particular}, they are isomorphic hedgehogs).
 \item \InnaB{Assume} one of the components $\Gamma_T$ ($T\in Irr(N)//G$) is an affine Dynkin diagram of type $D$ or $E$. \InnaB{Let} $G'_T$ be the dihedral or binary polyhedral group corresponding to this component by McKay's classification (Theorem \ref{thrm:mckay_class}). 
 
 Then $|G/N|$ is divisible by $|G'_T|$. 
 \end{enumerate}

\end{theorem}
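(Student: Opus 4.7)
The principal component $\Gamma_\triv$ is a tree (as a connected component of the forest $\Gamma$) and coincides with the McKay graph $\Gamma(G/N,\rho)$; by Proposition~\ref{prop:forest}, $\rho$ is irreducible and self-dual. Applying Theorem~\ref{thrm:main} to $(G/N,\rho)$ leaves two possibilities: either (A) $\dim\rho=2$ with $\Gamma_\triv$ of affine Dynkin type $\tilde D$ or $\tilde E$ and $G/N$ dihedral or binary polyhedral, or (B) $\dim\rho=2^n$ with $\Gamma_\triv$ a hedgehog with $4^n$ spines and $G/N$ an extra special $2$-group of order $2\cdot 4^n$. Every connected component $\Gamma_T$ is a tree whose Frobenius-Perron eigenvalue equals $\dim\rho$, witnessed by the positive integer eigenvector $(\dim\mu)_{\mu\in X(\Gamma_T)}$.

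For Part~(1) in case~(A), each $\Gamma_T$ is a connected simply laced loopless tree with spectral radius $2$, so the classification of such graphs used already for Theorem~\ref{thrm:mckay_class} forces $\Gamma_T$ to be of type $\tilde D$ or $\tilde E$. For case~(B) I proceed as follows. Trees are bipartite, so $\pm 2^n$ are both eigenvalues of $A_{\Gamma_T}$, giving $\operatorname{tr}(A_{\Gamma_T}^2)\ge 2\cdot 4^n$; the tree identity $\operatorname{tr}(A_{\Gamma_T}^2)=2(V_T-1)$ then yields $V_T\ge 4^n+1$. On the other hand, Lemma~\ref{lem:conn_comp_sum_of_sq} combined with the Clifford-theoretic bound $\dim\mu\ge|T|\dim\tau$ (valid for each $\mu\in X(\Gamma_T)$) yields $V_T\cdot|T|\le 2\cdot 4^n$. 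Together these bounds force $|T|=1$. With $|T|=1$, the vector $w_\mu:=\dim\mu/\dim\tau=[\mu\downarrow^G_N:\tau]$ is a positive integer Frobenius-Perron eigenvector of $A_{\Gamma_T}$ with eigenvalue $2^n$ satisfying $\sum_\mu w_\mu^2=2\cdot 4^n$. A short case analysis---propagating from the leaves (each leaf $\ell$ must have $w_\ell=1$ with its neighbor of value $2^n$, else the sum of squares is exceeded), then showing that the set of vertices of value $2^n$ has size exactly one, and finally that every neighbor of this unique vertex must itself be a leaf---forces $\Gamma_T$ to be a hedgehog with exactly $4^n$ spines.

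Part~(2) is then immediate: for $n\ne 1$ a hedgehog with $4^n$ spines is not an affine Dynkin diagram, so the principal component forces case~(B), and by Part~(1) every component is a hedgehog with $4^n$ spines, hence isomorphic to $\Gamma_\triv$. For Part~(3), the dimension vector on $\Gamma_T\cong\tilde D$ or $\tilde E$ is a positive integer multiple $c$ of the primitive Frobenius-Perron eigenvector, which coincides with the standard McKay labeling, has $\gcd=1$, and has sum of squares $|G_T'|$. The Clifford divisibility $(|T|\dim\tau)\mid\dim\mu$ forces $c=m\cdot|T|\dim\tau$ for some positive integer $m$, and substituting $\sum(\dim\mu)^2=c^2|G_T'|$ into Lemma~\ref{lem:conn_comp_sum_of_sq} gives $|G/N|=m^2|T||G_T'|$, so in particular $|G_T'|\mid|G/N|$.

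The main obstacle is the case analysis in Part~(1) case~(B): a priori there exist trees with spectral radius $2^n$ that are not hedgehogs---for instance the barbell with two internal vertices of degree $4^n-2^n+1$ joined by an edge and $4^n-2^n$ pendant leaves on each side has spectral radius exactly $2^n$, but fails the sum-of-squares identity $\sum w^2=2\cdot 4^n$ for $n\ge 1$. Ruling these out requires using Lemma~\ref{lem:conn_comp_sum_of_sq} as a sharp equality rather than only as a spectral-radius bound, and this is where most of the work lies.
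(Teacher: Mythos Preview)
Your proof is correct and follows the same overall architecture as the paper: Part~(1) case~(A) and Part~(3) are essentially identical to the paper's arguments (spectral-radius classification for the Dynkin case; Frobenius--Perron uniqueness plus Clifford divisibility for the $|G'_T|\mid|G/N|$ conclusion).

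The one genuine difference is how you reach $|T|=1$ in the hedgehog case. The paper picks a single leaf $\tau$ of $\Gamma_T$, writes $\dim\tau=a\,|T|\,s$ via Clifford, and bounds $(\dim\tau)^2+(\dim\mu)^2=a^2|T|^2 s^2(1+4^n)$ against the sum-of-squares identity to force $a=|T|=1$ in one stroke. You instead argue globally: bipartiteness of trees gives $\pm 2^n\in\mathrm{Spec}(A_{\Gamma_T})$, hence $V_T\ge 4^n+1$, while the coarse Clifford bound $\dim\mu\ge|T|\dim\tau$ gives $V_T\,|T|\le 2\cdot 4^n$; combining yields $|T|=1$. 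Both routes are short and valid. The paper's local argument has the advantage of simultaneously pinning down the leaf dimension and the multiplicity $a$, so it segues directly into ``all leaves have the same dimension, their unique common neighbor has dimension $2^n s$'' without a separate case analysis. Your spectral route is arguably cleaner conceptually (it explains \emph{why} $|T|=1$ via a vertex-count squeeze) but then needs the leaf-by-leaf analysis you sketch afterwards; that analysis is correct and in fact converges to exactly the paper's endgame (unique non-leaf vertex, then Lemma~\ref{lem:graph_aux}). Your closing remark about the barbell graph is a nice observation that the sum-of-squares equality, not just the spectral radius, is doing the real work here.
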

\begin{remark}
 For $\InnaB{n=1}$, the hedgehog with $4$ spines is the  affine Dynkin diagram $\widetilde{D}_4$.
\end{remark}

\begin{remark}
 In particular, if one of the components is an affine Dynkin diagram of types $D$ of $E$, the remaining components are also affine Dynkin diagrams of types $D$ or $E$.
 
 They do not have to be isomorphic to each other, as we will show in \InnaC{the examples of Section \ref{ssec:forest_examples}}.
 
 However, the theorem above states that the size of the dihedral or binary polyhedral group corresponding to the principal component (that is, $G/N$) will be divisible by the sizes of the dihedral or binary polyhedral groups corresponding to other connected components. 
 
 For instance, if the principal component is of type $\widetilde{E}_6$, then the only possibility for other connected components is to be of types $\widetilde{E}_6$ or $\widetilde{D}_4$.
  
\end{remark}
\InnaB{
For the proof, we need the following standard lemma:
\begin{lemma}\label{lem:graph_aux}
Given a tree $\Gamma$, if there exists a unique vertex $\nu$ connected to a leaf, then $\Gamma$ is a hedgehog.  
\end{lemma}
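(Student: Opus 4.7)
My plan is to analyze the structure of the forest obtained by removing $\nu$ from $\Gamma$, and to show each of its connected components is a single vertex; this forces $\Gamma$ to be a star graph centered at $\nu$, i.e., a hedgehog.

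First I would observe that the hypothesis immediately implies every leaf of $\Gamma$ is adjacent to $\nu$: any leaf has exactly one neighbor in $\Gamma$, and by the uniqueness of $\nu$ that neighbor must be $\nu$ itself.

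Next, let $T$ be any connected component of the forest $\Gamma \setminus \{\nu\}$. Since $\Gamma$ is a tree (hence contains no cycles), there is a unique vertex $v^*_T \in T$ joined to $\nu$ by an edge in $\Gamma$. The key claim is that $T = \{v^*_T\}$. Suppose for contradiction that $T$ has at least two vertices; being a finite tree, $T$ then contains at least two leaves (vertices of degree $1$ in $T$), so at least one such leaf $u \in T$ satisfies $u \neq v^*_T$. Since $u$ is not adjacent to $\nu$ in $\Gamma$, its degree in $\Gamma$ coincides with its degree in $T$, which is $1$. Hence $u$ is a leaf of $\Gamma$ not adjacent to $\nu$, contradicting the first observation.

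It then follows that every component of $\Gamma \setminus \{\nu\}$ is a single vertex joined to $\nu$ by an edge, so $\Gamma$ is a star graph centered at $\nu$, i.e., a hedgehog. The only nontrivial input in the argument is the standard graph-theoretic fact that a finite tree with at least two vertices has at least two leaves; there is no real obstacle beyond this.
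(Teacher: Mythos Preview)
Your argument is correct. The paper does not actually prove this lemma; it is stated as a ``standard lemma'' and left without proof, being used only as a black box in the proof of Theorem~\ref{thrm:forest}. Your removal-of-$\nu$ argument, reducing to the fact that a finite tree with at least two vertices has at least two leaves, is a clean and standard way to see it. One small remark: your argument implicitly assumes $\Gamma$ has at least one leaf (equivalently, at least two vertices), which is fine since the hypothesis postulates a vertex $\nu$ connected to a leaf; and note that the two-vertex tree is excluded by the hypothesis (both vertices would be connected to a leaf), which is consistent with how the paper handles that case separately in the proof of Theorem~\ref{thrm:forest}.
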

}
 \begin{proof}[Proof of Theorem \ref{thrm:forest}]
The principal component $\Gamma_{\triv}$ of $\Gamma$ is a tree, and a McKay graph in its own right: $\Gamma_{\triv} = \Gamma(G/N, \rho)$. Hence it is isomorphic one of the graphs given by Theorem \ref{thrm:main}. 

Consider a connected component $\Gamma'$ of $\Gamma$ corresponding to some $T\in Irr(N)//G$. Let $s$ be the dimension of any representative of $T$. By Lemma \ref{lem:conn_comp_sum_of_sq} we have:
\begin{equation}\label{eq:sum_squares_conn_comp}
 |T|s^2\abs{G/N} = \sum_{\mu \in X(\Gamma')}(\dim \mu)^2.
\end{equation}
 
We will now consider the different possibilities for $\Gamma_{\triv}$.

{\bf Case when $\Gamma_{\triv}$ is a ``hedgehog'' with $\InnaB{4}^n$ spines, $n\geq 0$}: in that case, $\dim \rho = 2^n$ \InnaC{and $|G/N|=2^{2n+1}$. The tree $\Gamma'$ is not a singleton, so} for each leaf $\tau$ in $\Gamma'$, we have a single vertex $\mu$ connected to it. Hence $$ \dim\mu = \dim \rho \cdot \dim\tau = 2^n \cdot \dim\tau.$$ Let $a$ be the multiplicity of any representative of $T$ in $\tau \downarrow_{N}$: then $\tau \downarrow_{N} = \bigoplus_{\sigma\in T} \sigma^{\oplus a}$ so $\dim\tau = a|T|s$. Hence 
\begin{align*}
(\dim \tau)^2 + (\dim\mu)^2 &= (\dim \tau)^2 (1+2^{2n}) = |T|^2 s^2 a^2(1+2^{2n})\leq |T|s^2\abs{G/N} \\ &= |T|s^2 2^{1+2n} = |T|s^2 2\cdot 2^{2n}. 
\end{align*}
\InnaB{
Thus $a^2|T| \InnaB{\leq \frac{2\cdot 2^{2n}}{1+2^{2n}}} <2$ and so $a=|T|=1$, which implies $\dim\tau = s$, \InnaC{$\dim \mu=2^n$}. So we have:
\begin{enumerate}[label=(\roman*)]
    \item\label{itm:conn_comp_1} All leaves in $\Gamma'$ have dimension $s$.
    \item\label{itm:conn_comp_2} \InnaB{Any vertex $ \nu$ which is connected to a leaf has dimension $2^n s$.}
\end{enumerate} } 
\InnaB{This, together with
Equation \eqref{eq:sum_squares_conn_comp}
implies that either there a unique such vertex $\nu$, or $\Gamma'$ is the tree with $2$ vertices. In the former case,} all the leaves in $\Gamma'$ are connected to this vertex $\nu$. Hence \InnaB{by Lemma \ref{lem:graph_aux}, this implies that $\Gamma'$ is a hedgehog. It now follows from \eqref{eq:sum_squares_conn_comp} and from \eqref{itm:conn_comp_1}, \eqref{itm:conn_comp_2} that the hedgehog has precisely $4^n$ spines, and it is isomorphic to the principal component $\Gamma_{\triv}$.} 

This shows that when $G/N$ is an extra special $2$-group, all the connected components of $\Gamma$ are isomorphic to each other.

{\bf Case when $\Gamma_{\triv}$ is an affine Dynkin diagram of types $\tilde{D}, \tilde{E}$}: in that case $\dim \rho = 2$. \InnaB{Recall that $2$ is an eigenvalue of $A_{\Gamma}$ and there exists a corresponding eigenvector with positive integer entries; this implies that} each connected component $\Gamma'$ of $\Gamma$ is a tree with the following property: $2$ is an eigenvalue of $A_{\Gamma'}$ and there exists a corresponding eigenvector with positive integer entries. Hence the spectral radius of $\Gamma'$ is $2$ and so by McKay's classification (\cite[Proposition 4]{McKay}) described in Section \ref{ssec:mckay_class} we have: each connected component of $\Gamma$ is isomorphic to an affine Dynkin diagram of types $\tilde{D}$ or $\tilde{E}$. In particular, it cannot be a hedgehog with $\InnaB{4}^n$ spines for $n\neq 1$. 

\InnaA{Let $\Gamma'=\Gamma_T$ be a connected component of $\Gamma$ corresponding to $T \in Irr(G)//N$. Let $G'_T$ be the dihedral or binary polyhedral group corresponding to this component by McKay's classification (Theorem \ref{thrm:mckay_class}). \InnaB{Let us choose an identification of $\Gamma'$ with the McKay graph of $G'_T$ corresponding to its natural $2$-dimensional representation.}

Let $d: X(\Gamma_T) \to \Z$ denote the natural markings on the vertices of the affine Dynkin diagram $\Gamma_T$: that is, $\dim(\tau)$ for $\tau \in Irr(G'_T)$. Then $$\sum_{\mu \in X(\Gamma_T)}d(\mu)^2 =\abs{G'_T}.$$

\InnaB{Recall that by the Frobenius-Perron theorem, an irreducible square matrix $A$ with non-negative entries has a unique (up to a real positive scalar) eigenvector with positive entries, and this eigenvector corresponds to the maximal eigenvalue of $A$.} Thus the eigenvector $\dim: X(\Gamma_T) \to \Z $ of $A_{\Gamma_T}$ corresponding to the dimensions of vertices $\mu \in \Gamma_T$ is a \InnaB{positive} multiple of the eigenvector $d: X(\Gamma_T) \to \Z$ of $A_{\Gamma_T}$. Let $a \in \mathbb{R}_{>0}$ be such that $\dim = a\cdot d$. 
Then by Equation \eqref{eq:sum_squares_conn_comp}, \begin{equation}\label{eq:conn_comp_aux2}
 |T|s^2\abs{G/N}= \sum_{\mu \in X(\Gamma_T)} (\dim\mu)^2 = \sum_{\mu \in X(\Gamma_T)}a^2 d(\mu)^2 = a^2 \abs{G'_T}  
\end{equation}
(here, as before, $s$ stands for the dimension of any representative of $T$).

Now, denote by $\tau_0 \in Irr(G)$ the leaf of $\Gamma_T$ which corresponds to the trivial representation of $G'_T$. Then $d(\tau_0)=1$ so $a = \dim(\tau_0)$. \InnaB{Thus  $a \in \Z_{> 0}$.}

Consider the restriction $\tau_0 \downarrow_N$; we have 
$\tau_0 \downarrow_N = \left(\bigoplus_{\sigma \in T} \sigma\right)^{\oplus b}$ for some $b> 0$.

Hence $a = \dim(\tau_0) = |T|sb$. \InnaB{Substituting this into \eqref{eq:conn_comp_aux2}, we obtain: }
$$|T|s^2 |G/N| = a^2|G'_T| = |T|^2 s^2 b^2 |G'_T|$$
which implies $|G/N| = |T| b^2 |G'_T|$. Thus $|G/N|$ is divisible by $|G'_T|$.

The proof of the theorem is complete.}

\end{proof}
\subsection{Examples}\label{ssec:forest_examples}

\subsubsection{Construction of unions of graphs}\label{sssec:constr_forests}

 \InnaB{Let $G$ be a group with a normal subgroup $H$ and a fixed isomorphism $G/H \cong \mathbb{F}_p^{\times}$, where $p$ is prime. 
 
 For any representation $\rho$ of $G$, we will construct a McKay graph which is the disjoint union of graphs $\Gamma(G, \rho)$ and $\Gamma(H, \rho\downarrow_H)$.
 
 Let $\InnaB{C_p}$ be a cyclic group on $p$ elements. Then \InnaB{our chosen isomorphism} defines a homomorphism $G/H\to Aut(\InnaB{C_p}) \cong \mathbb{F}_p^{\times}$; in other words, it defines an action of $G$ on $\InnaB{C_p}$ by group automorphisms. The set $Irr(\InnaB{C_p})//G$ has precisely two elements, which are $\{\triv_{\InnaB{C_p}}\}$ and the orbit of all the non-trivial group characters of $\InnaB{C_p}$.
 
 Fix $\xi \in Irr(\InnaB{C_p})$ a non-trivial character. \InnaB{We will consider it as a character of $H\times \InnaB{C_p}$.}
 
 Let $G':=G\ltimes \InnaB{C_p}$ be the semidirect product. Then by \InnaB{the classification of irreducible representations of semidirect products with an abelian subgroup (see e.g. \cite[Section 8.2]{Serre})}, we have a bijection $$S:Irr(G) \sqcup Irr(H)\stackrel{\sim}{\longrightarrow} Irr(G').$$ On $Irr(G)$, this map is the pullback with respect to the quotient $q:G'/\InnaB{C_p} \to G$, and on $Irr(H)$, this map is the induction $\tau \mapsto (\tau\otimes \xi)\uparrow_{H\InnaB{\times} \InnaB{C_p}}^{G'}$.
 
Let $\rho$ be a representation of $G$. Then 
\begin{align*}
    \forall \lam, \mu \in Irr(G), \; &\text{ we have: }\;\; [S(\lam) \otimes S(\rho):S(\mu)]_{G'} = [\lam\otimes\rho :\mu]_{G},\\
    \forall \lam\in Irr(G), \mu\in  Irr(H), \;&\text{ we have: } \;\;[S(\lam) \otimes S(\rho):S(\mu)]_{G'}=0,\\
     \forall \lam\in Irr(G), \mu\in  Irr(H), \;&\text{ we have: } \;\;[S(\mu) \otimes S(\rho):S(\lam)]_{G'}=0.
\end{align*} 
\comment{since $\InnaB{C_p} \subset Ker(S(\rho))$ and $\InnaB{C_p} \subset \InnaB{Ker}(S(\lam))$, but $\InnaB{C_p} \not\subset Ker(S(\mu))$.}

Thus the vertices of $\Gamma(G', \rho)$ \InnaB{coming from} $Irr(G)$ do not lie in the same connected components as the vertices coming from $Irr(H)$.

Furthermore, \InnaB{the full subgraph on the vertices coming from $Irr(G)$ is isomorphic to $\Gamma(G, \rho)$.}

\InnaB{We will now show that the remaining components of $\Gamma(G', S(\rho))$ form the McKay graph $\InnaC{\Gamma(H, \rho\downarrow_{H})}$; that is, for $\lam, \mu\in Irr(H)$, we will prove: \begin{align*}
[S(\lam) \otimes S(\rho):S(\mu)]_{G'} &\,= \, \dim\Hom_{H}( \lam\otimes \rho\downarrow_{H},\mu) 
\end{align*}}

Indeed, for $\lam, \mu\in Irr(H)$, we have:
\begin{align*}
[S(\lam) \otimes S(\rho):S(\mu)]_{G'} &\,=\,\dim\Hom_{G'}(S(\lam) \otimes S(\rho),S(\mu)) \\
&\,=\,\dim\Hom_{G'}((\lam\otimes \xi)\uparrow_{H\InnaB{\times} \InnaB{C_p}}^{G'} \otimes S(\rho), (\mu\otimes \xi)\uparrow_{H\InnaB{\times} \InnaB{C_p}}^{G'}) \\
&\,=\,\dim\Hom_{G'}( \left(\lam\otimes \xi \otimes S(\rho)\downarrow_{H\InnaB{\times} \InnaB{C_p}}\right)\uparrow_{H\InnaB{\times} \InnaB{C_p}}^{G'},(\mu\otimes \xi)\uparrow_{H\InnaB{\times} \InnaB{C_p}}^{G'}) \\
&\,=\,\dim\Hom_{H\InnaB{\times} \InnaB{C_p}}( \lam\otimes \xi \otimes S(\rho)\downarrow_{H\InnaB{\times} \InnaB{C_p}},\bigoplus_{\bar{g}\in G/H}\mu^g\otimes \xi^g).\\
\end{align*}

Here $\mu^g$ denotes the conjugation of the $H$-representation $\mu$ by a representative $g\in \bar{g}$, and similarly $\xi^g$ \InnaC{denotes the conjugation of $\xi$ by a representative $g\in \bar{g}$}. Since $\xi^g = \xi$ precisely for $g\in H$ \InnaC{and $ S(\rho)\downarrow_{H \times C_p}$ is trivial on $C_p$}, we have: 
\begin{align*}
[S(\lam) \otimes S(\rho):S(\mu)]_{G'} &\,= \,\dim\Hom_{H\InnaB{\times} \InnaB{C_p}}(\lam\otimes \xi \otimes  S(\rho)\downarrow_{H\InnaB{\times} \InnaB{C_p}},\bigoplus_{\bar{g}\in G/H}\mu^g\otimes \xi^g)\\&=\,\dim\Hom_{H\InnaB{\times} \InnaB{C_p}}( \lam\otimes \xi \otimes S(\rho)\downarrow_{H\InnaB{\times} \InnaB{C_p}},\mu\otimes \xi) = \dim\Hom_{H}( \lam\otimes \rho\downarrow_{H},\mu) 
\end{align*}

To summarize, for any representation $\rho$ of $G$,
$$\Gamma(G', S(\rho))\; \InnaB{\cong} \; \Gamma(G, \rho) \bigsqcup \Gamma(H, \rho\downarrow_H).$$

\begin{remark}\label{rmk:constr_forests}
  The construction in Section \ref{sssec:constr_forests} also works when the isomorphism $G/H \cong \mathbb{F}_p^{\times}$ is replaced by a homomorphism
 $G/H \hookrightarrow GL_n(\mathbb{F}_p)$, where the action of $G/H$ on $\mathbb{F}_p^n\setminus \{0\}$ is transitive. The cyclic group $\InnaB{C_p}$ is then replaced by the elementary abelian $p$-group $\mathbb{F}_p^n$.
\end{remark}

}
\subsubsection{Examples of forests with non-isomorphic connected components}
\InnaB{
We will now apply the constructions of Section \ref{sssec:constr_forests} to construct examples where both $G, H$ are either binary polyhedral or dihedral groups. We will consider the tautological faithful $2$-dimensional irreducible representation $\rho$ of $G\subset SL_2(\mathbb{C})$. \InnaC{So in our examples, both} $\Gamma(G, \rho)$ and $\Gamma(H, \rho)$ \InnaC{will be} affine Dynkin diagrams of types $D$ or $E$.

\begin{example}\label{ex:forest_dihedral}
 \InnaB{Let $n>1$.} Let $\mathrm{Dih}_{4n}$ be the group of symmetries of a regular $4n$-gon. Then $\mathrm{Dih}_{4n}$ has a subgroup $H$ of index $2$ which is isomorphic to $\mathrm{Dih}_{2n}$ (these are the symmetries preserving \InnaB{a regular $2n$-gon obtained by removing half of the vertices}).
 
 Let $\rho$ be the tautological $2$-dimensional representation of $\mathrm{Dih}_{4n}$. Restricted to $H$, this gives the tautological representation of $\mathrm{Dih}_{2n}$. By the construction in Section \ref{sssec:constr_forests}, we obtain a McKay graph for the $2$-dimensional representation of $G'=\mathrm{Dih}_{4n} \ltimes \InnaB{C}_3$ coming from $\rho$ \InnaB{satisfying} $$\Gamma(G', S(\rho)) \;=\; \Gamma(\mathrm{Dih}_{4n}, \rho) \,\bigsqcup\, \Gamma(\mathrm{Dih}_{2n}, \rho\downarrow_H).$$
 
 Thus $\Gamma(G', S(\rho))$ is a \InnaB{disjoint} union of $2$ non-isomorphic trees $\tilde{D}_{\InnaB{4}n}$ and $\tilde{D}_{\InnaB{2}n}$,
where one has  $\InnaB{2n} + 3$ vertices and the other $\InnaB{n} + 3$ vertices.

A similar construction is possible for the binary dihedral groups $\mathrm{BDih}_{2n}$ and $\mathrm{BDih}_n$.
\end{example}

\InnaB{
In order to construct the examples in this section, we will need the following results on the dihedral group $\mathrm{Dih}_2 \cong C_2 \times C_2$, the tetrahedral group  $\mathbf{T}$ and the octrahedral group $\mathbf{O}$, all seen as subgroups of $SO(3, \mathbb{R})$. These statements are well-known, but we include their proofs for completeness of the presentation.

\begin{lemma}\label{lem:normal_tower}
The groups $\mathrm{Dih}_2, \mathbf{T}$ are normal subgroups of $\mathbf{O}$, with quotients $\quotient{\mathbf{O}}{\mathbf{T}} = C_2$ and $\quotient{\mathbf{O}}{\mathrm{Dih}_2} = S_3$. Furthermore,  $\mathrm{Dih}_2 \triangleleft \mathbf{T}$ with $\quotient{\mathbf{T}}{\mathrm{Dih}_2} = C_3$.
\end{lemma}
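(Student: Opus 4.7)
My approach would be to realize all three groups simultaneously inside $SO(3,\mathbb{R})$ in a compatible way, and then reduce the claims to well-known facts about $S_4$.

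Take $\mathbf{O}$ to be the rotational symmetry group of the cube $[-1,1]^3$, which equivalently is the rotational symmetry group of the dual octahedron. Then $\mathbf{O}$ acts faithfully on the set of four long diagonals of the cube, yielding the familiar isomorphism $\mathbf{O} \cong S_4$. Under this isomorphism, $\mathbf{T}$ is identified with the index-$2$ subgroup $A_4$: it is the rotational stabilizer of the inscribed tetrahedron on the vertices $\{(1,1,1), (-1,-1,1), (-1,1,-1), (1,-1,-1)\}$, which in $S_4$ corresponds to the even permutations of the four diagonals.

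The key concrete step is to identify the subgroup $\mathrm{Dih}_2 \subset \mathbf{O}$ from Example \ref{ex:Dih_2} (the three $\pi$-rotations about the coordinate axes) with the Klein four-group $V = \{e, (12)(34), (13)(24), (14)(23)\} \subset S_4$. This is a direct geometric check: each $\pi$-rotation about a coordinate axis negates two coordinates, hence permutes the four cube-diagonals as a product of two disjoint transpositions, and the three such rotations give the three distinct elements of $V \setminus \{e\}$.

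Given this dictionary, the three assertions become standard group theory. For $\mathbf{T} \triangleleft \mathbf{O}$ with quotient $C_2$: $A_4$ has index $2$ in $S_4$. For $\mathrm{Dih}_2 \triangleleft \mathbf{O}$ with quotient $S_3$: the set $V$ is a union of conjugacy classes in $S_4$ (identity plus all elements of cycle type $(2,2)$), hence normal, and $|S_4/V|=6$ with $S_4/V$ non-abelian (images of a transposition and a $3$-cycle do not commute) forces $S_4/V \cong S_3$. For $\mathrm{Dih}_2 \triangleleft \mathbf{T}$ with quotient $C_3$: double transpositions are even so $V \subset A_4$, normality in $A_4$ follows from normality in $S_4$, and the quotient has order $12/4=3$ so must be $C_3$.

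The only step requiring more than invocation of standard facts is the explicit identification of $\mathrm{Dih}_2$ with $V$ inside $S_4$; this is an elementary computation and I do not expect genuine difficulty.
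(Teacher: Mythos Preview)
Your argument is correct but proceeds differently from the paper. You transport the problem into $S_4$ via the action of $\mathbf{O}$ on the four long diagonals, identify $\mathbf{T}$ with $A_4$ and $\mathrm{Dih}_2$ with the Klein four-group $V$, and then invoke standard facts about the normal subgroup structure of $S_4$. The paper instead stays on the geometric side throughout: it realizes the quotient $\mathbf{O}/\mathbf{T}\cong C_2$ as the action of $\mathbf{O}$ on the pair of tetrahedra inscribed in the cube (the stabilizer of one being $\mathbf{T}$), and realizes $\mathbf{O}/\mathrm{Dih}_2\cong S_3$ as the action of $\mathbf{O}$ on the three segments joining centers of opposite faces (the kernel being exactly the $\pi$-rotations about the coordinate axes). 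Your route is shorter once the $\mathbf{O}\cong S_4$ dictionary is in hand and reduces everything to textbook facts; the paper's route avoids importing that isomorphism and instead exhibits each quotient concretely as a permutation action on a small geometric set, which is self-contained and makes the normal subgroups visible directly. The paper in fact records your identification $(\mathbf{O},\mathbf{T},\mathrm{Dih}_2)\leftrightarrow(S_4,A_4,V)$ as a remark following the lemma.
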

\begin{remark}
The group $\mathbf{O}$ is isomorphic to the symmetric group $S_4$. Under this isomorphism, the group $\mathbf{T}$ corresponds to $A_4$ and the group $\mathrm{Dih}_2$ corresponds to the normal subgroup $\{\id, (12)(34), (13)(24), (14)(23) \}$ of $S_4$.
\end{remark}

\begin{proof}
Consider a cube in $\mathbb{R}^3$ whose edges are parallel to the coordinate axes. We will number its vertices as follows:
\begin{equation*}
    \begin{tikzpicture}[anchorbase,scale=1.5]
\draw[-] (0,0.2)--(0,0.8) ;
\draw[-] (0.2,0)--(0.8,0) ;
\draw[-] (1,0.2)--(1,0.8) ;
\draw[-] (0.2,1)--(0.8,1) ;

\node at (0,0) {$ \scriptstyle 2 $};
\node at (0,1) {$\scriptstyle 3 $};
\node at (1,0) {$\scriptstyle 1 $};
\node at (1,1) {$\scriptstyle 4 $};

\draw[-, dashed] (0.1,0.1)--(0.253553391,0.253553391) ;
\draw[-] (0.1,1.1)--(0.253553391,1.253553391) ;
\draw[-] (1.1,0.1)--(1.253553391,0.253553391) ;
\draw[-] (1.1,1.1)--(1.253553391,1.253553391) ;

\node at (0.353553391,0.353553391) {$\scriptstyle 4'$};
\node at (1.353553391,0.353553391) {$ \scriptstyle 3' $};
\node at (0.353553391,1.353553391) {$ \scriptstyle 1' $};
\node at (1.353553391,1.353553391) {$ \scriptstyle 2'$};

\draw[-, dashed] (0.353553391,0.553553391)--(0.353553391,1.153553391) ;
\draw[-, dashed] (0.553553391,0.353553391)--(1.153553391,0.353553391) ;
\draw[-] (1.353553391,0.553553391)--(1.353553391,1.153553391) ;
\draw[-] (0.553553391,1.353553391)--(1.153553391,1.353553391) ;
\end{tikzpicture} 
\end{equation*}
The vertices $1, 2', 3,4'$ form a regular tetrahedron ``inscribed" in the cube, and the same goes for the vertices $1',2, 3', 4 $:
\begin{equation*}
    \begin{tikzpicture}[anchorbase,scale=1.7]
\draw[-, dashed] (0,0.2)--(0,0.8) ;
\draw[-, dashed] (0.2,0)--(0.8,0) ;
\draw[-, dashed] (1,0.2)--(1,0.8) ;
\draw[-, dashed] (0.2,1)--(0.8,1) ;

\node at (0,0) {$ \cdot $};
\node at (0,1) {$\cdot$};
\node at (1,0) {$\cdot$};
\node at (1,1) {$\cdot $};

\draw (1,0)--(0.353553391,0.353553391) ;
\draw (1,0)--(1.353553391,1.353553391) ;
\draw (1,0)--(0,1) ;
\draw (0,1)--(0.353553391,0.353553391) ;
\draw (0,1)--(1.353553391,1.353553391) ;
\draw (0.353553391,0.353553391)--(1.353553391,1.353553391) ;

\draw[-, dashed] (0.1,0.1)--(0.253553391,0.253553391) ;
\draw[-, dashed] (0.1,1.1)--(0.253553391,1.253553391) ;
\draw[-, dashed] (1.1,0.1)--(1.253553391,0.253553391) ;
\draw[-, dashed] (1.1,1.1)--(1.253553391,1.253553391) ;

\node at (0.353553391,0.353553391) {$\cdot $};
\node at (1.353553391,0.353553391) {$ \cdot $};
\node at (0.353553391,1.353553391) {$ \cdot $};
\node at (1.353553391,1.353553391) {$\cdot$};

\draw[-, dashed] (0.353553391,0.553553391)--(0.353553391,1.153553391) ;
\draw[-, dashed] (0.553553391,0.353553391)--(1.153553391,0.353553391) ;
\draw[-, dashed] (1.353553391,0.553553391)--(1.353553391,1.153553391) ;
\draw[-, dashed] (0.553553391,1.353553391)--(1.153553391,1.353553391) ;
\end{tikzpicture} 
\quad , \quad 
    \begin{tikzpicture}[anchorbase,scale=1.7]
\draw[-, dashed] (0,0.2)--(0,0.8) ;
\draw[-, dashed] (0.2,0)--(0.8,0) ;
\draw[-, dashed] (1,0.2)--(1,0.8) ;
\draw[-, dashed] (0.2,1)--(0.8,1) ;

\node at (0,0) {$ \cdot $};
\node at (0,1) {$\cdot$};
\node at (1,0) {$\cdot$};
\node at (1,1) {$\cdot $};

\draw[-, dashed] (0.1,0.1)--(0.253553391,0.253553391) ;
\draw[-, dashed] (0.1,1.1)--(0.253553391,1.253553391) ;
\draw[-, dashed] (1.1,0.1)--(1.253553391,0.253553391) ;
\draw[-, dashed] (1.1,1.1)--(1.253553391,1.253553391) ;

\draw (0,0)--(0.353553391,1.353553391) ;
\draw (0,0)--(1.353553391,0.353553391) ;
\draw (0,0)--(1,1) ;
\draw (1,1)--(0.353553391,1.353553391) ;
\draw (1,1)--(1.353553391,0.353553391) ;
\draw (0.353553391,1.353553391)--(1.353553391,0.353553391) ;

\node at (0.353553391,0.353553391) {$\cdot $};
\node at (1.353553391,0.353553391) {$ \cdot $};
\node at (0.353553391,1.353553391) {$ \cdot $};
\node at (1.353553391,1.353553391) {$\cdot$};

\draw[-, dashed] (0.353553391,0.553553391)--(0.353553391,1.153553391) ;
\draw[-, dashed] (0.553553391,0.353553391)--(1.153553391,0.353553391) ;
\draw[-, dashed] (1.353553391,0.553553391)--(1.353553391,1.153553391) ;
\draw[-, dashed] (0.553553391,1.353553391)--(1.153553391,1.353553391) ;
\end{tikzpicture} 
\end{equation*}

These are the only two regular tetrahedrons whose vertices are vertices of the cube. The group of symmetries of the cube, $\mathbf{O}$, acts on the set of two tetrahedrons; let us denote such the stabilizer subgroup of one of the tetrahedrons by $H\subset \mathbf{O}$. The action is clearly transitive, so $[ \mathbf{O}:H]=2$ and thus $H \triangleleft \mathbf{O}$, and $\mathbf{O}/H \cong C_2$. This implies that $\abs{H}=12$. Furthermore, the subgroup $H$ stabilizes both tetrahedrons, which means that $H$ acts on each tetrahedron by isometries which preserve the tetrahedron and the orientation. Thus $H \subset \mathbf{T}$, and since $\abs{H}=12$, we conclude that $H = \mathbf{T}$. 

Next, consider the centers of the faces in the cube. These $6$ points give us $3$ mutually orthogonal segments (they are depicted below); we will denote the set of these segments by $\mathcal{S}$.

\begin{equation*}
    \begin{tikzpicture}[anchorbase,scale=1.7]
\draw[-, dashed] (0,0.2)--(0,0.8) ;
\draw[-, dashed] (0.2,0)--(0.8,0) ;
\draw[-, dashed] (1,0.2)--(1,0.8) ;
\draw[-, dashed] (0.2,1)--(0.8,1) ;

\node at (0,0) {$ \cdot $};
\node at (0,1) {$\cdot$};
\node at (1,0) {$\cdot$};
\node at (1,1) {$\cdot $};

\draw[-, dashed] (0.1,0.1)--(0.253553391,0.253553391) ;
\draw[-, dashed] (0.1,1.1)--(0.253553391,1.253553391) ;
\draw[-, dashed] (1.1,0.1)--(1.253553391,0.253553391) ;
\draw[-, dashed] (1.1,1.1)--(1.253553391,1.253553391) ;

\node at (0.5,0.5) {$ \bullet $};
\node at (0.6752,0.1752) {$\bullet$};
\node at (0.1752, 0.6752) {$\bullet$};
\node at (0.75, 0.75) {$\bullet$};
\node at (1.1752, 0.6752) {$\bullet$};
\node at (0.6752, 1.1752) {$\bullet$};

\draw (0.5,0.5)--(0.75, 0.75) ;
\draw (0.6752,0.1752)--(0.6752, 1.1752);
\draw (0.1752, 0.6752)-- (1.1752, 0.6752);

\node at (0.353553391,0.353553391) {$\cdot $};
\node at (1.353553391,0.353553391) {$ \cdot $};
\node at (0.353553391,1.353553391) {$ \cdot $};
\node at (1.353553391,1.353553391) {$\cdot$};

\draw[-, dashed] (0.353553391,0.553553391)--(0.353553391,1.153553391) ;
\draw[-, dashed] (0.553553391,0.353553391)--(1.153553391,0.353553391) ;
\draw[-, dashed] (1.353553391,0.553553391)--(1.353553391,1.153553391) ;
\draw[-, dashed] (0.553553391,1.353553391)--(1.153553391,1.353553391) ;
\end{tikzpicture} 
\end{equation*}
These segments also connect the midpoints of opposite edges in each of the \InnaC{two} tetrahedrons above.

The group $\mathbf{O}$ and its subgroup $\mathbf{T}$ act on the set $\mathcal{S}$. Let $H'\triangleleft \mathbf{O}$ be the kernel of this action, i.e. the intersection of all the stabilizers; clearly, $H'$ consists of the rotations by multiples of $\pi$ radians through any of the axes in $\mathcal{S}$. So we have: $\abs{H'}=4$, $H'\subset \mathbf{T}$ and
$[\mathbf{O}:H'] = 6$. The action of $\mathbf{O}$ on the set $\mathcal{S}$ gives a homomorphism $\mathbf{O}/H' \hookrightarrow S_3$, and since $[\mathbf{O}:H'] = 6$, we conclude that this is an isomorphism. Computing the index of $H'$ in $\mathbf{T}$ we conclude: $\mathbf{T}/H' \cong C_3$. 

It remains to check that $H'=\mathrm{Dih}_2 \cong C_2 \times C_2$, which follows from the description of $\mathrm{Dih}_2$ as a subgroup of $SO(3, \mathbb{R})$ (see Example \ref{ex:Dih_2}). The lemma is proved.
\end{proof}

Lifting the results of Lemma \ref{lem:normal_tower} via the double cover $SU(2) \to SO(\InnaC{3}, \mathbb{R})$, we immediately obtain a similar relation between the binary tetrahedral group $ \mathrm{B}\mathbf{T}$, the binary octahedral group $ \mathrm{B}\mathbf{O}$, and the binary dihedral group $\mathrm{BDih}_2 \cong Q_8$:

\begin{corollary}\label{cor:normal_tower_binary_poly}
The groups $\mathrm{BDih}_2, \mathrm{B}\mathbf{T}$ are normal subgroups of $\mathrm{B}\mathbf{O}$, with quotients $\quotient{\mathrm{B}\mathbf{O}}{\mathrm{B}\mathbf{T}} = C_2$ and $\quotient{\mathrm{B}\mathbf{O}}{\mathrm{BDih}_2} = S_3$. Furthermore, we have $\mathrm{BDih}_2 \triangleleft \mathrm{B}\mathbf{T}$ with $\quotient{\mathrm{B}\mathbf{T}}{\mathrm{BDih}_2} = C_3$.
\end{corollary}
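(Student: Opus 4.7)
The plan is to deduce the corollary directly from Lemma \ref{lem:normal_tower} via the standard double cover $\pi : SU(2) \twoheadrightarrow SO(3,\mathbb{R})$, whose kernel is the central subgroup $\{\pm I\} \cong C_2$. By the definition of ``binary polyhedral group'' given in Section \ref{ssec:mckay_class}, the groups $\mathrm{B}\mathbf{O}, \mathrm{B}\mathbf{T}, \mathrm{BDih}_2$ are precisely the preimages $\pi^{-1}(\mathbf{O}), \pi^{-1}(\mathbf{T}), \pi^{-1}(\mathrm{Dih}_2)$ in $SU(2)$ (the last identification using $\mathrm{BDih}_2 \cong Q_8$, the standard preimage of the Klein four-group of Example \ref{ex:Dih_2}).

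The first step is to record the general principle: if $K \triangleleft G$ is any inclusion of subgroups of $SO(3,\mathbb{R})$, then $\pi^{-1}(K)\triangleleft \pi^{-1}(G)$, and since both preimages contain $\ker(\pi) = \{\pm I\}$, the induced map on quotients
\[
\pi^{-1}(G)\big/\pi^{-1}(K) \;\longrightarrow\; G/K
\]
is an isomorphism (it is surjective because $\pi|_{\pi^{-1}(G)}$ is surjective onto $G$, and injective because its kernel consists of cosets $g\pi^{-1}(K)$ with $\pi(g) \in K$, i.e.\ with $g \in \pi^{-1}(K)$).

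Applying this principle to the three normal inclusions supplied by Lemma \ref{lem:normal_tower}, namely $\mathbf{T}\triangleleft \mathbf{O}$, $\mathrm{Dih}_2\triangleleft \mathbf{O}$ and $\mathrm{Dih}_2\triangleleft \mathbf{T}$, yields respectively $\mathrm{B}\mathbf{T}\triangleleft \mathrm{B}\mathbf{O}$ with quotient $C_2$, $\mathrm{BDih}_2\triangleleft \mathrm{B}\mathbf{O}$ with quotient $S_3$, and $\mathrm{BDih}_2\triangleleft \mathrm{B}\mathbf{T}$ with quotient $C_3$. This is exactly the content of the corollary.

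I do not expect any serious obstacle here: the only thing to verify carefully is the identification $\pi^{-1}(\mathrm{Dih}_2) \cong Q_8 \cong \mathrm{BDih}_2$, which is the base case $n=2$ of the definition of the binary dihedral group recalled in the excerpt, and the general fact about quotients of preimages, which is immediate from the first isomorphism theorem applied to $\pi|_{\pi^{-1}(G)}$.
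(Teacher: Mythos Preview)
Your proof is correct and follows exactly the approach of the paper, which simply states that the corollary is obtained by lifting Lemma \ref{lem:normal_tower} via the double cover $SU(2)\to SO(3,\mathbb{R})$. Your version is in fact more detailed than the paper's one-line justification, spelling out the preimage/quotient isomorphism principle explicitly.
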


}

\begin{example}

 Let $\mathrm{B}\mathbf{T}$ be the binary tetrahedral group, \InnaB{and consider the normal subgroup $\mathrm{BDih}_2 \triangleleft \mathrm{B}\mathbf{T}$ seen in Corollary \ref{cor:normal_tower_binary_poly}.
 
  Let $\rho$ be the tautological $2$-dimensional representation of $\mathrm{B}\mathbf{T}\subset SU(2)$. Then $\rho\downarrow_{\mathrm{BDih}_2}$ is the $2$-dimensional irreducible representation of $\mathrm{BDih}_2 \cong Q_8$.}
  
  The quotient $\quotient{\mathrm{B}\mathbf{T}}{\mathrm{BDih}_2} \cong C_3$ acts on the Klein $4$-group $\mathbb{F}_2^2$ by group automorphisms permuting transitively the order $2$ elements; the action is given by the embedding $$\quotient{\mathrm{B}\mathbf{T}}{\mathrm{BDih}_2} \cong C_3 \hookrightarrow GL_3(\mathbb{F}_2) \cong S_3.$$

 By Section \ref{sssec:constr_forests} and Remark \ref{rmk:constr_forests}, we obtain a McKay graph for the $2$-dimensional representation of $G'=\mathrm{B}\mathbf{T}\ltimes \mathbb{F}_2^2$ coming from $\rho$: $$\Gamma(G', S(\rho)) \; =\; \Gamma(\mathrm{B}\mathbf{T}, \rho) \,\bigsqcup \,\Gamma(Q_8, \rho\downarrow_{Q_8})$$
 
 Thus $\Gamma(G', S(\rho))$ is a disjoint union of $2$ non-isomorphic trees $\widetilde{E}_6$ and $\widetilde{D}_4$.
 
 \end{example}
 
\begin{example}
 Let $\mathrm{B}\mathbf{O}$ be the binary octahedral group, and consider the subgroup $ \mathrm{B}\mathbf{T} \triangleleft \mathrm{B}\mathbf{O}$ given by Corollary \ref{cor:normal_tower_binary_poly}.
 
 Let $\rho$ be the tautological $2$-dimensional representation of $\mathrm{B}\mathbf{O}\subset SU(2)$. Then $\rho\downarrow_{\mathrm{B}\mathbf{T}}$ is the tautological $2$-dimensional irreducible representation of $\mathrm{B}\mathbf{T}$.
 
By Section \ref{sssec:constr_forests}, we obtain a McKay graph for the $2$-dimensional representation of $G'=\mathrm{B}\mathbf{O} \ltimes \InnaB{C_3}$ coming from $\rho$: $$\Gamma(G', S(\rho)) \;=\; \Gamma(\mathrm{B}\mathbf{O}, \rho) \,\bigsqcup \,\Gamma(\mathrm{B}\mathbf{T}, \rho\downarrow_{\mathrm{B}\mathbf{T}})$$
 
 Thus $\Gamma(G', S(\rho))$ is a disjoint union of $2$ non-isomorphic trees $\widetilde{E}_7$ and $\widetilde{E}_6$.
\end{example}

 \begin{example}
   Let $G=\mathrm{B}\mathbf{O}$ and 
   and consider the subgroup $\mathrm{BDih}_2 \triangleleft \mathrm{B}\mathbf{O}$ given by Corollary \ref{cor:normal_tower_binary_poly}.
   
   Let $\rho$ be the \InnaB{tautological} $2$-dimensional representation of $\mathrm{B}\mathbf{O}\subset SU(2)$. Then $\rho\downarrow_{\mathrm{BDih}_2}$ is the tautological $2$-dimensional irreducible representation of $\mathrm{BDih}_2 \cong Q_8 $.

   \InnaB{Recall that by Corollary \ref{cor:normal_tower_binary_poly}, $\quotient{\mathrm{B}\mathbf{O}}{\mathrm{BDih}_2} \cong S_3$.} Then $\quotient{\mathrm{B}\mathbf{O}}{\mathrm{BDih}_2}$ acts on the Klein $4$-group $\mathbb{F}_2^2$ by group automorphisms permuting transitively the order $2$ elements; the action is given by the isomorphism $$\quotient{\mathrm{B}\mathbf{O}}{\mathrm{BDih}_2} \cong S_3 \stackrel{\sim}{\longrightarrow} GL_3(\mathbb{F}_2) .$$

 By Section \ref{sssec:constr_forests} and Remark \ref{rmk:constr_forests}, we obtain a McKay graph for the $2$-dimensional representation of $G'=\mathrm{B}\mathbf{O}\ltimes \mathbb{F}_2^2$ coming from $\rho$: $$\Gamma(G', S(\rho)) \;=\; \Gamma(\mathrm{B}\mathbf{O}, \rho) \,\bigsqcup\, \Gamma(Q_8, \rho\downarrow_{Q_8})$$
 
 Thus $\Gamma(G', S(\rho))$ is a disjoint union of $2$ non-isomorphic trees $\widetilde{E}_7$ and $\widetilde{D}_4$.
 \end{example}
}

\end{document}